\newtheorem{theorem}{Theorem}
\newtheorem{proposition}{Proposition}
\newtheorem{corollary}{Corollary}
\newtheorem{lemma}{Lemma}
\newcommand{\R}{{\mathbb R}}
\newcommand{\Z}{{\mathbb Z}}
\newcommand{\set}[2]{ \left\{ #1 \ \left| \ #2 \right. \right\} }
\newcommand{\angg}[1]{\left<  \! \left< #1 \right>  \! \right>} 
\newcommand{\re}{{\mathrm{Re}}}
\title{Multilinear oscillatory integral operators and geometric stability}
\author{Philip T. Gressman}
\thanks{PTG has been partially supported by NSF Grant DMS-1764143.}
\author{Ellen Urheim}
\begin{document}
\dedicatory{In honor of Guido Weiss, whose generous mentorship and enthusiasm for mathematics have enriched the lives of many students.}
\begin{abstract}
In this article we prove a sharp decay estimate for certain multilinear oscillatory integral operators of a form inspired by the general framework of Christ, Li, Tao, and Thiele \cite{cltt2005}. A key purpose of this work is to determine when such estimates are stable under smooth perturbations of both the phase \textit{and} corresponding projections, which are typically only assumed to be linear. The proof is accomplished by a novel decomposition which mixes features of Gabor or windowed Fourier bases with features of wavelet or Littlewood-Paley decompositions. This decomposition very nearly diagonalizes the problem and seems likely to have useful applications to other geometrically-inspired objects in Fourier analysis.
\end{abstract}
\maketitle

\section{Formulation and Introduction}

Fix some positive integer $d \geq 2$ and two smooth, real-valued functions $\Phi$ and $\rho$ on a box $B_1 := (-b_1,b_1)^{2d} \subset \R^{2d}$. We suppose that the gradient of $\rho$ is nonvanishing and let $M$ be the zero set of $\rho$, i.e., $M := \set{ x \in B_1}{ \rho(x) = 0}$. The main object of study will be the functional of the form
\begin{equation} I_\lambda(f_1,\ldots,f_{2d}) = \int_{M} e^{i \lambda \Phi(x)} \left[ \prod_{j=1}^{2d} f_j(x_j) \right] a(x) d \sigma(x) \label{myfunc} \end{equation}
where $f_1,\ldots,f_{2d}$ are measurable functions on $\R$, $a$ is smooth with compact support in $B_0 := [-b_0,b_0]^{2d} \subset B_1$, and $d \sigma$ is Lebesgue measure on $M$.
Our goal is to prove multilinear $L^p$ inequalities for \eqref{myfunc} which sharply capture the best-possible decay of the norm as a function of $|\lambda|$ and are stable under sufficiently smooth perturbations of $\Phi$, $\rho$, and $a$. 

This question is motivated by various projects in the literature over the last 20 years which seek to better understand the general phenomena that produce norm decay for oscillatory integral operators. One of these core programs has been to understand and quantify stability of such operators, which is a key concern in a number of highly influential works, including Carbery, Christ, and Wright \cite{ccw1999}, Phong, Stein, and Sturm \cite{pss2001}, and Carbery and Wright \cite{cw2002}.
The present article is also intimately connected to the broad program of Christ, Li, Tao, and Thiele \cite{cltt2005} to classify nondegeneracy of multilinear oscillatory integrals in a fully general way; see 
\cite{noz2018,noz2018II,zengthesis,greenblatt2008II,co2014,gx2016,xiao2017,ggx2018}
for just a few of the most recent developments furthering this program.

It is a curious feature of the previous works that none are able to establish decay rates greater than $|\lambda|^{-1/2}$ for multilinear oscillatory integral operators which act on functions of a single real variable as \eqref{myfunc} does. It is of course possible to construct multilinear oscillatory integral operators for which the decay is indeed much greater than $|\lambda|^{-1/2}$, including, for example, the operator
\[ \tilde I_\lambda(f_1,\ldots,f_d) := \int_{\R^d} e^{i \lambda (\Phi_1(x_1,x_2) + \Phi_2(x_2,x_3) + \cdots + \Phi_{d-1}(x_{d-1},x_d))} a(x) \prod_{j=1}^d f_j(x_j) dx, \]
which, for suitable cutoff functions $a$ can be shown by the standard H\"{o}rmander theory and Fubini's Theorem to satisfy the inequality
\[ |\tilde I_\lambda(f_1,\ldots,f_d)| \lesssim |\lambda|^{-\frac{d-1}{2}} ||f_1||_{L^2(\R)} ||f_d||_{L^2(\R)} \prod_{j=2}^{d-1} ||f_j||_{L^\infty(\R)} \]
provided that $\partial_{x_i x_{i+1}}^2 \Phi_i$ is nonvanishing and suitably regular for each $i$. In a recent talk, Christ \cite{christtalk2019} observed that operators exhibiting such high decay are strongly unstable, meaning that arbitrarily small $C^\infty$ (and in his example, polynomial) perturbations of the phase drastically reduce the decay rate. We would like to add to this deep observation another, more elementary one: when the number of (one-variable) functions in the oscillatory integral exceeds the dimension of the space on which the integral is taken, using linear projections also leads immediately to geometrically unstable inequalities. By this we mean that an oscillatory integral operator of the form
\[ \int_{\R^d} e^{i \lambda \Phi(x)} a(x) \prod_{j=1}^{N} f_j(\pi_j(x)) dx \]
for $N > d$ and the $\pi_j$ being linear projections from $\R^d$ to $\R$ in general position will fail to exhibit any $\lambda$-decay at all if any one of the linear projections $\pi_j$ is replaced by $\pi_j + \epsilon \Phi$ for any nonzero $\epsilon$ (since then the phase $\Phi$ would then be a linear combination of the $\pi_j$'s, which destroys $\lambda$-decay, as explained in \cite{cltt2005}).  This is the particular motivation behind the object \eqref{myfunc}: the underlying geometric structure of the defining function $\rho$ naturally \textit{permits} smooth perturbations of the associated projections, and the degree of multilinearity of \eqref{myfunc} exceeds the dimension of $M$ by $1$, so in this regime, multilinear oscillatory integrals with linear projections already exhibit this strong sort of geometric instability.

Because of our particular emphasis on stability, we will pay slightly more attention to constants than is commonly done. Throughout the paper, the notation $A \lesssim B$ will be used to indicate that $A \leq C B$ for some $C$ which depends only on the following ``admissible'' constants: the dimension $d$,
 fixed finite constants $C_\rho$, $C'_\rho$, $C_\Phi$, and $C_a$ such that \begin{equation}
\max_{|\alpha| \leq 2d + 3} \sup_{x \in B_1} |\partial^\alpha \rho(x)| \leq C_{\rho}, \label{finiterhonorms}
\end{equation}
\begin{equation}
\max_{|\alpha| \leq 2d + 2}  \sup_{x \in B_1} |\partial^{\alpha} \Phi(x)| \leq C_{\Phi}, \label{finitephinorms}
\end{equation}
\begin{equation}
\max_{|\alpha| \leq 2d + 2} \sup_{x \in B_0} |\partial^{\alpha} a(x)| \leq C_{a}, \label{finiteanorms}
\end{equation}
and \begin{equation} (C_{\rho}')^{-1} \leq \min_{i=1,\ldots,2d} \inf_{x \in B_1} | \partial_{i} \rho(x)|, \label{implicit}
\end{equation}
and the finite, positive constant $c$ appearing in the hypothesis \eqref{mainhyp} of the main theorem below.
Also for purposes of stability, it will always be assumed that the parameter $\lambda$ in \eqref{myfunc} satisfies
\begin{equation} |\lambda|^{-\frac{1}{2}} \leq \min \left\{b_1 - b_0, 1 \right\} \label{lambdaisntsmall} \end{equation}
where $b_0$ and $b_1$ are the parameters associated to the boxes $B_0$ and $B_1$ in the definition \eqref{myfunc}. Our theorem is as follows:
\begin{theorem}
Suppose that there is a positive constant $c$ such that for every $x \in B_1$ and every $(\tilde \tau,\tau) \in \R^2$ such that $\tilde \tau^2 + \tau^2 = 1$, the indices $\{1,\ldots,2d\}$ may be partitioned into two sets $\{i_1,\ldots,i_d\}$ and $\{j_1,\ldots,j_d\}$ such that \label{mainthm}
\begin{equation} \left| \det \! \left[ \begin{array}{cccc} 
\partial_{{i_1}} \rho(x) \! \! & \partial^2_{{i_1} {j_1}} \! \left( \tilde \tau  \Phi(x) + \tau \rho(x) \right)  & \cdots & \partial^2_{{i_1} {j_d}} \! \left( \tilde \tau  \Phi(x)+ \tau \rho(x) \right)  \\
\vdots & \vdots & \ddots & \vdots \\
\partial_{{i_d}} \rho(x) \! \! & \partial^2_{{i_d} {j_1}} \! \left( \tilde \tau  \Phi(x) +  \tau \rho(x) \right)  & \cdots & \partial^2_{{i_d} {j_d}} \left( \tilde \tau \Phi(x) + \tau  \rho(x) \right) \\
0 & \partial_{{j_1}} \rho(x) & \cdots & \partial_{{j_d}} \rho(x)
\end{array} \right]  \right| \geq c \label{mainhyp} \end{equation}
Then for all $f_1,\ldots,f_{2d} \in L^2(\R)$,
\begin{equation} | I_\lambda (f_1,\ldots,f_{2d})| \lesssim |\lambda|^{-\frac{d-1}{2}} \prod_{j=1}^{2d} || f_j ||_2. \label{mainresult}
\end{equation}
\end{theorem}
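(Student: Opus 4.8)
The plan is to expand $I_\lambda$ in a wave-packet frame at the natural scale $|\lambda|^{-1/2}$, to show that the resulting multilinear form on the packet coefficients is almost diagonal, and then to read off \eqref{mainresult} by a bilinear Schur test after splitting the $2d$ factors into two groups of $d$ according to the partition supplied by \eqref{mainhyp}. I would begin with routine reductions: take $\lambda$ large and positive. Since for each $x \in \overline{B_0}$ and each $(\tilde\tau,\tau)$ on the unit circle some partition of $\{1,\dots,2d\}$ realizes \eqref{mainhyp}, and there are only finitely many partitions, a compactness argument produces finitely many open sets covering $\overline{B_0}$ times the circle, on each of which one fixed partition $\{i_1,\dots,i_d\}\sqcup\{j_1,\dots,j_d\}$ works with constant $c/2$. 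The circle parameter will enter the argument as the normalized ratio of $\lambda$ to the Lagrange multiplier $c$ forced by the constraint $\rho=0$ at a critical point of the phase restricted to $M$; the degenerate endpoint $\tilde\tau=0$, corresponding to $|c|\gg|\lambda|$ (some $f_j$ oscillating much faster than $\lambda$), is dispatched at the outset by non-stationary phase, using that $\partial_i\rho$ is bounded away from $0$ so that no coordinate axis is ever normal to $M$. This leaves finitely many pieces, each carrying a fixed index split.

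On each piece I would decompose every $f_j$ into a Gabor-type wave-packet frame at spatial scale $|\lambda|^{-1/2}$ and reciprocal frequency scale $|\lambda|^{1/2}$, so that $f_j=\sum_k c^j_k g^j_k$ with $\sum_k|c^j_k|^2\sim\|f_j\|_2^2$ and each $g^j_k$ essentially supported on a $|\lambda|^{-1/2}$-interval about a point $x^0_j$ and having frequency within $O(|\lambda|^{1/2})$ of some $\xi^0_j$; the interplay of this fixed fine scale with the coarser frequency decomposition implicit in the first step is the Gabor--Littlewood--Paley hybrid announced in the abstract. Expanding \eqref{myfunc} multilinearly gives $I_\lambda=\sum_{\vec k}A_{\vec k}\prod_j c^j_{k_j}$ with $A_{\vec k}=I_\lambda(g^1_{k_1},\dots,g^{2d}_{k_{2d}})$. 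The near-diagonalization lemma I would aim for has three parts. First, $|A_{\vec k}|\lesssim|\lambda|^{-(d-1)/2}$: this is elementary, since a normalized packet has $L^\infty$-size $\sim|\lambda|^{1/4}$, there are $2d$ of them, and a $|\lambda|^{-1/2}$-box meets $M$ in $d\sigma$-measure $\sim|\lambda|^{-(2d-1)/2}$. Second, $A_{\vec k}$ is negligible --- bounded by $|\lambda|^{-(d-1)/2}(1+\mathrm{dist})^{-N}$ for all $N$ --- unless the packet centers are \emph{resonant}: $x^0=(x^0_1,\dots,x^0_{2d})$ lies within $O(|\lambda|^{-1/2})$ of $M$ and $\xi^0+\lambda\nabla\Phi(x^0)=c\,\nabla\rho(x^0)+O(|\lambda|^{1/2})$ for some $c$; this is obtained by rescaling $x=x^0+|\lambda|^{-1/2}y$ (which also produces the factor $|\lambda|^{-(2d-1)/2}$ from the surface measure) and integrating by parts repeatedly along the nearly flat, unit-scale slice of $M$.

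The third and decisive part is that, for a fixed choice of the $d$ packets on the $i$-side, only $O(1)$ choices of the $d$ packets on the $j$-side are resonant with it. Here is where \eqref{mainhyp} is used: writing the resonance conditions as a system of $2d+1$ equations for the unknowns $(x^0_J,\xi^0_J,c)$ and rescaling the frequencies by $\lambda$ (set $\gamma=c/\lambda$, which ranges over a bounded set), the Jacobian of this system is, by the cofactor expansion of a bordered matrix, equal to $(1+\gamma^2)^{(d-1)/2}$ times the determinant appearing in \eqref{mainhyp} with $(\tilde\tau,\tau)$ proportional to $(1,-\gamma)$; hypothesis \eqref{mainhyp} thus makes the rescaled system nonsingular with all data of size $O(1)$, so the implicit function theorem forces its solution set to be $O(1)$-separated in the rescaled coordinates, hence $O(1)$ cells. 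Granting the lemma, I would group the packet indices of the $d$ factors on the $i$-side into one multi-index and those on the $j$-side into another, turning $I_\lambda$ into a bilinear form $\sum_{\vec k_I,\vec k_J}A_{\vec k_I,\vec k_J}C^I_{\vec k_I}C^J_{\vec k_J}$ with $\|C^I\|_{\ell^2}\|C^J\|_{\ell^2}=\prod_j\|f_j\|_2$; by the three parts, every row sum and every column sum of the kernel is $\lesssim O(1)\cdot|\lambda|^{-(d-1)/2}$ (the number of resonant partners times the size of one entry, the rapid off-diagonal decay absorbing everything else with no logarithmic loss), so the Schur test gives $|I_\lambda|\lesssim|\lambda|^{-(d-1)/2}\prod_j\|f_j\|_2$, and summing the finitely many pieces completes the proof.

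I expect the main obstacle to be the near-diagonalization lemma, especially its second and third parts: one must carry out the rescaled stationary-phase analysis carefully enough to see exactly the resonance relations and to recognize their Jacobian as the bordered determinant of \eqref{mainhyp} in precisely its stated form --- $\nabla\rho$ along the border, the pencil $\tilde\tau\Phi+\tau\rho$ inside --- uniformly over all values of the circle parameter, since it is this uniformity that makes the estimate robust under smooth perturbations of $\Phi$, $\rho$, and $a$. A secondary difficulty is that the packets are built from arbitrary $L^2$ functions, so forming the coefficients $A_{\vec k}$ requires giving meaning to the restriction of such packets to $M$; this is harmless because a packet is essentially constant across a $|\lambda|^{-1/2}$-cell while $M$ meets such a cell in a nearly flat slice, but making it quantitative with constants depending only on the admissible quantities is where most of the technical work lies.
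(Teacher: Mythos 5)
Your architecture is genuinely parallel to the paper's --- wave-packet expansion, a trivial size bound per packet tuple, rapid decay off a resonance set, a count of resonant configurations governed by exactly the bordered Jacobian of \eqref{mainhyp}, and a Schur test playing the role of the paper's $L^1\times L^\infty$ interpolation --- but there is a genuine gap in your treatment of packets whose center frequency exceeds $|\lambda|$. Your claim that the endpoint $\tilde\tau=0$, i.e.\ $|c|\gg|\lambda|$, ``is dispatched at the outset by non-stationary phase'' because no coordinate axis is normal to $M$ is not correct. The dangerous configuration is not one $f_j$ oscillating much faster than the others (that case is indeed non-stationary, and is the paper's region $\Xi_1$); it is all $2d$ frequencies comparable to some $R\gg|\lambda|$ with the full vector $\xi^0$ nearly parallel to $\nabla\rho(x^0)$ --- for instance $\xi^0=c\,\nabla\rho(x^0)$, which has all components comparable precisely because every $\partial_i\rho$ is bounded below. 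There the tangential gradient of $\lambda\Phi+2\pi\xi\cdot x$ can vanish, the configuration is fully resonant, and it contributes to the main term; controlling it is exactly why \eqref{mainhyp} is hypothesized on the whole circle including $\tilde\tau=0$ (the paper's set $E$ in \eqref{smallregion} explicitly retains the points with $|\tau_0|\gtrsim r^{-2}$). Your parenthetical assertion that $\gamma=c/\lambda$ ``ranges over a bounded set'' is therefore unjustified, and discarding this regime loses the theorem.

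Even if you keep these configurations, your fixed spatial scale $|\lambda|^{-1/2}$ is too coarse for them: restricted to the curved hypersurface $M$, the linear phase $2\pi\xi\cdot x$ acquires second derivatives of size $|\xi|$, so on a $|\lambda|^{-1/2}$-cell the quantity $(\text{scale})^2\times(\text{Hessian})$ is $|\xi|/|\lambda|\gg1$ and the repeated integration by parts underlying parts two and three of your near-diagonalization lemma no longer closes --- the terms involving $X^\ell\varphi$, $\ell\ge2$, are not $O(1)$ per step. The paper's decomposition takes window width $(\max\{|\lambda|,|\xi|\})^{-1/2}$ exactly so that $r^2(|\lambda|+|\xi|)\lesssim1$ always holds; this variable scale (Lemma \ref{systemlemma} and Proposition \ref{wavepacket2}) is the one idea your proposal is missing. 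With that modification, and with the $\tilde\tau\approx0$ pieces fed into the resonance count rather than discarded, your counting argument and Schur test become essentially the discrete shadow of the paper's change-of-variables Lemma \ref{cov} and its $L^1\times L^\infty$ interpolation, at the cost of managing the Schwartz tails that the paper's continuous system avoids.
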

The main hypothesis \eqref{mainhyp} shares features of both the usual H\"{o}rmander mixed Hessian condition as well as a Phong-Stein rotational curvature condition but is slightly different than either of these notions. It is also important to note that since the choice of partition can depend on the point $x$ and the pair $(\tilde \tau, \tau)$, the multilinear operator \eqref{myfunc} can be nondegenerate in situations where corresponding bilinear operators of several variables would be unavoidably singular. In particular,
examples of operators satisfying (\ref{mainhyp}) exist for all values of $d$ as we will demonstrate now. Using coordinates $(x_1,\ldots,x_d,x_1',\ldots,x_{d}')$ on $\R^{2d}$, take
\[ \rho(x) := x_1 x_1' + \cdots + x_{d-1} x_{d-1}' + x_1 + \cdots + x_d + x_1' + \cdots x_d'; \]
note $\rho$ has no second-order dependence on $x_d$ or $x_d'$, and satisfies (\ref{finiterhonorms}) and (\ref{implicit}) for appropriate constants on the set $B_1 = (-b_1, b_1)^{2d}$ for any positive $b_1 < 1$.
If $d$ is even, we select
$$ \Phi(x) := x_1x_{d/2+1} + x_1'x_{d/2+1}' + \cdots + x_{d/2}x_d,$$
where we have explicitly dropped the term $x_{d/2}'x_d'$ from the pattern,
and for odd $d$, we instead take
$$ \Phi(x) := x_1 x_{(d+1)/2} + x_1' x_{(d+1)/2}' + \cdots + x_{(d-1)/2}' x_{d-1}'.$$
For $d$ even, if $\tilde{\tau} \neq 0$, partition the variables into groups 
\begin{equation}
\{x_1, x_1', \dots, x_{d/2}, x_{d/2}' \} \; \text{ and } \; \{ x_{d/2+1}, x_{d/2+1}', \dots, x_d, x_d' \} 
\end{equation}
to conclude that
\begin{align*}
\det \left[\begin{array}{cccc} 
	\partial_{x_1} \rho & \tilde{\tau} \partial^2_{x_1 x_{d/2+1}} \Phi + \tau \partial^2_{x_1 x_{d/2+1}} \rho & \cdots & \tilde{\tau} \partial^2_{x_1 x_d'} \Phi + \tau \partial^2_{x_1 x_d'} \rho \\ 
	\vdots & \vdots & \ddots & \vdots \\ 
	\partial_{x_{d/2}'} \rho & \tilde{\tau} \partial^2_{x_{d/2}' x_{d/2+1}} \Phi + \tau \partial^2_{x_{d/2}' x_{d/2+1}}  \rho & \cdots & \tilde{\tau} \partial^2_{x_{d/2}' x_d'} \Phi + \tau \partial^2_{x_{d/2}' x_d'}  \rho \\
	0 & \partial_{x_{d/2+1}} \rho & \cdots & \partial_{x_d'} \rho \\ 
 \end{array} \right]
\end{align*} 
is exactly
\begin{align*}
\det \left[\begin{array}{cccccc} 
	1 + x_1' & \tilde{\tau} & 0 & \cdots & 0 & 0 \\
	1 + x_1 & 0 & \tilde{\tau} &   & \vdots &\vdots \\
	\vdots & \vdots &  & \ddots &0 & 0 \\
	1 + x_{d/2}' & 0 & \cdots & 0 & \; \tilde{\tau} \; & 0 \\
	1 + x_{d/2} & 0 & \cdots & & 0 & 0 \\
	0 & 1+x_{d/2+1}' & \cdots & 1+x_{d-1} & 1 & 1
 \end{array} \right] &= -(1+x_{d/2})\tilde{\tau}^{d-1}
\end{align*} 
whose magnitude is uniformly bounded below on $B_1$. If $\tau \neq 0$, partition the variables using the collections $\{x_1, \dots, x_d\}$ and $\{x_1', \dots, x_d'\}$ to see that the determinant corresponding to the condition \eqref{mainhyp} equals
\begin{align*}
\det \left[\begin{array}{cccccc} 
	1 + x_1' & \tau & 0 & \cdots & 0 & 0 \\
	\vdots & 0 & \tau &   & \vdots &\vdots \\
	\vdots & \vdots &  & \ddots &0 & 0 \\
	1 + x_{d-1}' & 0 & \cdots & 0 & \; \tau \; & 0 \\
	1  & 0 & \cdots & & 0 & 0 \\
	0 & 1+x_1 & \cdots &  & 1 + x_{d-1} & 1
 \end{array} \right] &= -\tau^{d-1} \neq 0
\end{align*} 
which is nonzero everywhere. If $d$ is odd, partitioning with either the grouping
\begin{equation*}
\{x_1, x_1', \dots, x_{(d-1)/2}, x_{(d-1)/2}', x_d \} \; \text{ and } \; \{x_{(d+1)/2}, x_{(d+1)/2}', \dots, x_{d-1}, x_{d-1}', x_d' \} 
\end{equation*}
or the grouping $\{x_1, \dots, x_d\}$ and $\{x_1', \dots, x_d'\}$ leads to a similar conclusion.

It is also meaningful to observe that the exponent $-(d-1)/2$ is best-possible for \eqref{mainresult}. To see this, suppose without loss of generality that $0 \in M$ and that $a$ is real and strictly positive on a neighborhood of $0$. Since $\partial_i \rho(x) \neq 0$ for all $i = 1, \dots, 2d$ and all $x \in B_1$, the Implicit Function Theorem gives (in particular) a function $\Psi$ and a neighborhood $\overline{0} \in U \subset \R^{2d-1}$ such that $M$ is the graph of $x_{2d} = \Psi(\overline{x})$ on $U$, where $\overline{x} := (x_1, \dots, x_{2d-1})$. By \eqref{finiterhonorms} and \eqref{implicit}, we have \begin{equation} |\Psi(\overline{x})| \leq c \max_{j \neq 2d} |x_j| \label{lip} \end{equation} for some positive $c$.
By Taylor's Theorem, $|\Phi(x) - \Phi(0) - \sum_{j=1}^{2d} x_j \partial_j \Phi(0)|$ is $O(|x|^2)$ as $x \to 0$, so there must be a sufficiently small $c' > 0$ such that for any $|\lambda| \geq 1$, $|x_j| < c'|\lambda|^{-1/2}$ for $j=1, \dots, 2d-1$ and $|x_{2d}| < cc'|\lambda|^{-1/2}$ for the $c$ from \eqref{lip} imply $|\lambda \Phi(x) - \lambda \Phi(0) - \lambda \sum_{j=1}^{2d} x_j \partial_j \Phi(0)| < \pi/4$, and therefore $\re[e^{i\lambda[\Phi(x) - \Phi(0) - \sum_j x_j \partial_j \Phi(0)]}] > 1/\sqrt{2}$. 
We can now find the desired lower bound for large $\lambda$ by testing $I_\lambda(f_1, \dots, f_{2d})$ on $f_j(x_j) := e^{-i\lambda x_j \partial_j \Phi(0)} \chi_{ \{ |\cdot| < c' |\lambda|^{-1/2} \}}(x_j)$, $j = 1, \dots, 2d-1$ and $f_{2d}(x_{2d}) := e^{-i\lambda x_{2d} \partial_{2d} \Phi(0)} \chi_{ \{ |\cdot| \leq cc' |\lambda|^{-1/2} \}}(x_{2d})$. Then
\begin{align*}
|I_\lambda&(f_1, \dots, f_{2d})| \geq |\re [ I_\lambda(f_1, \dots, f_{2d})] | \\
	& \geq C \left|  \int_M \prod_{j=1}^{2d-1} \chi_{|\cdot| < c' |\lambda|^{-1/2}}(x_j) \cdot \chi_{|\cdot| < c c' |\lambda|^{-1/2}}(x_{2d}) a(x) \, d\sigma(x) \right| \\
	& \geq C' \int_M \prod_{j=1}^{2d-1} \chi_{|\cdot| < c' |\lambda|^{-1/2}}(x_j) \cdot \chi_{|\cdot| < c c' |\lambda|^{-1/2}}(x_{2d}) \, d\sigma(x) 
\end{align*}
The second line follows from combining the terms $e^{i\lambda \Phi(x)}$, the complex exponential factors in the functions $f_j$, and the constant $1 = |e^{-i\lambda \Phi(0)}|$, and then using the estimate given by Taylor's Theorem (since all other factors in the integrand are real-valued).
We now may parametrize $M$ by $(\overline{x},\Psi(\overline{x}))$ and conclude that
\begin{align*}
|I_\lambda&(f_1, \dots, f_{2d})|  \\
	& \geq C'' \int_{\R^{2d-1}} \prod_{j=1}^{2d-1} \chi_{|\cdot| < c' |\lambda|^{-1/2}} (x_j) \cdot \chi_{|\cdot| < cc' |\lambda|^{-1/2}} (\Psi(\overline{x})) (1+|\nabla \Psi|^2)^{1/2} \, d\overline{x} \\
	& \geq C''' \int_{\R^{2d-1}} \prod_{j=1}^{2d-1} \chi_{|\cdot| < c' |\lambda|^{-1/2}} (x_j) \, d\overline{x} \geq C'''' |\lambda|^{-\frac{2d-1}{2}}.
\end{align*}
The third line follows, in part, from the fact that $|\Psi(\overline{x})| < cc'|\lambda|^{-1/2}$ on the support of the integrand, by \eqref{lip}. This shows sharpness because $|\lambda|^{-(2d-1)/2}$ is comparable to  $|\lambda|^{-(d-1)/2} \prod_j ||f_j||_2$.

The proof of Theorem \ref{mainthm} is accomplished in several stages. In Section \ref{decompsec}, we describe the decomposition that we use to study the multilinear operator \eqref{myfunc}. The decomposition features a scaling which in some sense interpolates between the standard Gabor and wavelet-type tilings of the time-frequency plane. We focus solely on the one-dimensional case that is of use to us but note that there are natural extensions to higher dimensions. Section \ref{spsec} contains a number of elementary lemmas related to stationary phase and integration on manifolds which will be used frequently throughout the proof. The proof of Theorem \ref{mainthm} is taken up fully in Section \ref{proofsec}, which is split into two parts: Section \ref{schwartzsec} deals with regions of rapid decay corresponding to so-called ``Schwartz tails'' and the main contributions are handled in Section \ref{mainsec}.

\section{A customized frequency space decomposition}
\label{decompsec}

Suppose $\lambda$ is any real number with $|\lambda| \geq 1$.  Let $n_0$ be the largest positive integer such that $n_0^2 \leq |\lambda|$ and let  $\mathcal Q_{\lambda}$ denote the collection of all intervals in $\R$ of one of the three following forms:
\begin{itemize}
\item $[k n_0, (k+1) n_0]$ for some $k \in \{-n_0,\ldots,n_0 - 1\}$,
\item $[n^2, (n+1)^2]$ for some integer $n \geq n_0$,
\item $[-(n+1)^2,-n^2]$ for some integer $n \geq n_0$.
\end{itemize}
The intervals in ${\mathcal Q}_\lambda$ are nonoverlapping and have lengths which grow like the square root of distance to the origin, in the sense that for any $\xi \in Q \in {\mathcal Q}_\lambda$,
\begin{equation} \frac{1}{9} |Q|^2 \leq  \max \left\{ |\lambda| ,  |\xi| \right\} \leq 4 |Q|^2. \label{boxsize} \end{equation}
Here and throughout, $|Q|$ denotes the length of $Q$. To see why \eqref{boxsize} holds, assume first that $Q$ is an interval of the first type. Then $|Q| = n_0$ and $|\xi| \leq n_0^2 \leq |\lambda|$, so 
\[ \frac{|Q|^2}{9} = \frac{n_0^2}{9} \leq  |\lambda| = \max\{  |\lambda|, |\xi| \}  \leq (n_0+1)^2 \leq 4 |Q|^2. \]
If $Q$ is of the second or third types, then $|Q| = 2n+1$ and $n^2 \leq |\xi| \leq (n+1)^2$ for some $n \geq n_0$. Then
\[ \frac{|Q|^2}{9} = \frac{(2n+1)^2}{9} \leq  n^2 \leq \max\{ |\lambda|, |\xi| \}  \leq (n+1)^2 \leq |Q|^2.\]

\begin{lemma}
Let $\varphi \in C^\infty(\R)$ be compactly supported with Fourier transform $\widehat \varphi$ strictly positive on $[-1/2,1/2]$. Let $\xi_Q$ be the center of $Q \in {\mathcal Q}_\lambda$ and let  \label{systemlemma}
\[ \varphi_Q(x) := |Q|^{\frac{1}{2}} e^{2 \pi i x \xi_Q} \varphi( |Q| x). \]
For all $\xi \in \R$ in the interior of $Q \in {\mathcal Q}_\lambda$, fix $\varphi_{\xi} := \varphi_Q$, and let $\varphi_\xi := 0$ if $\xi$ is a boundary point of any $Q \in {\mathcal Q}_\lambda$. There is a bounded linear map $V$ from $L^2(\R)$ into $L^2(\R \times \R)$ and a dense subspace of $L^2(\R)$ such that
\begin{equation} f(x) = \int_{\R \times \R} V f(y,\xi) \varphi_{\xi} (x-y) ~dy d \xi \label{expansion} \end{equation}
for all $f$ in the dense subspace, with the integrand belonging to $L^1(\R \times \R)$ for all such $f$. The norm of $V$ depends only on the choice of $\varphi$ (and in particular is independent of $\lambda$).
\end{lemma}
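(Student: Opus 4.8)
The plan is to build $V$ by an explicit formula on the Fourier transform side, exploiting the fact that $\mathcal{Q}_\lambda$ tiles $\R$ up to a set of endpoints of measure zero. Using the convention $\widehat{g}(\eta) = \int_\R g(x)e^{-2\pi i x\eta}\,dx$, one computes $\widehat{\varphi_Q}(\eta) = |Q|^{-1/2}\,\widehat{\varphi}\bigl((\eta-\xi_Q)/|Q|\bigr)$. The point is that when $\eta$ lies in $Q$ — an interval of length $|Q|$ centered at $\xi_Q$ — one has $(\eta-\xi_Q)/|Q|\in[-1/2,1/2]$, so by hypothesis $\widehat{\varphi_Q}(\eta)\geq c_0|Q|^{-1/2}$, where $c_0 := \min_{[-1/2,1/2]}\widehat{\varphi}>0$; in particular $\widehat{\varphi_Q}$ never vanishes on $Q$. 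I will write $Q(\eta)$ for the (a.e.\ unique) member of $\mathcal{Q}_\lambda$ containing $\eta$.

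Next I would define $V$ as follows: for $\xi$ in the interior of some $Q\in\mathcal{Q}_\lambda$, let $Vf(\cdot,\xi)$ be the $L^2(\R)$ function with Fourier transform $\eta\mapsto \widehat{f}(\eta)\,\mathbf{1}_Q(\eta)\bigl(|Q|\,\widehat{\varphi_Q}(\eta)\bigr)^{-1}$, and set $Vf(\cdot,\xi)=0$ when $\xi$ is an endpoint. This Fourier multiplier is supported in $Q$ and bounded there by $c_0^{-1}|Q|^{-1/2}$, so $Vf(\cdot,\xi)$ genuinely lies in $L^2(\R)$; moreover $V$ is manifestly linear in $f$ and jointly measurable in $(y,\xi)$, being a countable sum of indicators in $\xi$ times fixed functions of $y$. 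To see that $V$ maps boundedly into $L^2(\R\times\R)$, apply Plancherel in $y$ and then Tonelli to write $\|Vf\|_{L^2(\R\times\R)}^2$ as $\int_\R|\widehat f(\eta)|^2\bigl(\int_\R\mathbf{1}_{Q(\xi)}(\eta)\,|Q(\xi)|^{-2}\,|\widehat{\varphi_{Q(\xi)}}(\eta)|^{-2}\,d\xi\bigr)\,d\eta$; the inner integral equals $|Q(\eta)|$ times the constant value $|Q(\eta)|^{-2}|\widehat{\varphi_{Q(\eta)}}(\eta)|^{-2}$, which is at most $c_0^{-2}$ by the lower bound above. Hence $\|Vf\|_{L^2(\R\times\R)}\leq c_0^{-1}\|f\|_{L^2(\R)}$, a bound depending only on $\varphi$.

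For the reproducing identity \eqref{expansion} I would take the dense subspace of those $f$ with $\widehat{f}\in C_c^\infty(\R)$. Only the finitely many intervals $Q_1,\dots,Q_N\in\mathcal{Q}_\lambda$ meeting $\mathrm{supp}\,\widehat f$ contribute, and on the interior of each $Q_m$ the function $Vf(\cdot,\xi)$ equals a single fixed $g_m\in L^2(\R)$. Since each $\varphi_{Q_m}$ is compactly supported, Cauchy--Schwarz gives $\int_\R |g_m(y)|\,|\varphi_{Q_m}(x-y)|\,dy\leq\|g_m\|_2\,\|\varphi_{Q_m}\|_2 = \|g_m\|_2\,\|\varphi\|_2$, so for each fixed $x$ the integrand $(y,\xi)\mapsto Vf(y,\xi)\varphi_\xi(x-y)$ is absolutely integrable on $\R\times\R$, with total mass at most $\|\varphi\|_2\sum_m|Q_m|\,\|g_m\|_2$. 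Fubini then reduces the integral to $\sum_m\int_{Q_m}(g_m*\varphi_{Q_m})(x)\,d\xi$. But $(g_m*\varphi_{Q_m})^{\wedge}=\widehat{g_m}\,\widehat{\varphi_{Q_m}}=|Q_m|^{-1}\widehat f\,\mathbf{1}_{Q_m}$, so $g_m*\varphi_{Q_m}=|Q_m|^{-1}(\widehat f\,\mathbf{1}_{Q_m})^{\vee}$ is independent of $\xi$ and the integration in $\xi$ over $Q_m$ exactly cancels the factor $|Q_m|^{-1}$. Summing over $m$ gives $\bigl((\sum_m\mathbf{1}_{Q_m})\widehat f\bigr)^{\vee}=\widehat f^{\vee}=f$, because the $Q_m$ are non-overlapping and cover $\mathrm{supp}\,\widehat f$, so $\sum_m\mathbf{1}_{Q_m}=1$ almost everywhere on $\mathrm{supp}\,\widehat f$.

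The step I expect to require the most care is the $L^1(\R\times\R)$ claim together with the interchange of integrals: because $Vf(\cdot,\xi)$ is not Schwartz — the sharp cutoff $\mathbf{1}_Q$ in its Fourier transform produces an $O(1/|y|)$ tail — one cannot manipulate the double integral naively, and it is precisely the pairing against the compactly supported window $\varphi_Q$, via Cauchy--Schwarz, together with the finiteness of the number of active boxes when $\widehat f\in C_c^\infty$, that restores absolute convergence and legitimizes Fubini. Everything else is a routine bookkeeping of Fourier transforms together with the tiling and comparability property \eqref{boxsize} of $\mathcal{Q}_\lambda$. If one wanted $Vf(\cdot,\xi)$ itself to be smooth and rapidly decreasing, one could instead use a smooth cutoff adapted to a slight dilate of $Q$ with a subordinate partition of unity, but this refinement is not needed here.
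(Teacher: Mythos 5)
Your proposal is correct and is essentially the paper's own argument: you define $Vf(\cdot,\xi)$ on the Fourier side by restricting $\widehat f$ to the box $Q$ containing $\xi$ and dividing by $|Q|\,\widehat{\varphi_Q}$ (which is exactly the paper's operator $|Q|^{-1/2}T_Q$), obtain the $\lambda$-independent $L^2$ bound from Plancherel and the lower bound on $\widehat\varphi$ over $[-1/2,1/2]$, and recover $f$ from the tiling of $\mathrm{supp}\,\widehat f$ by finitely many boxes, with the Cauchy--Schwarz pairing against the compactly supported window justifying absolute convergence and Fubini. The only cosmetic difference is your slightly smaller dense subspace ($\widehat f\in C_c^\infty$ versus merely compactly supported $\widehat f$), which is immaterial.
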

\begin{proof}
We fix the dense subspace to be all $f$ whose Fourier transform is compactly supported.
If $f_Q$ denotes Fourier projection onto the interval $Q$, i.e., $\widehat f_Q := \widehat f \chi_Q$, then clearly
\[ f = \sum_{Q \in \mathcal Q_\lambda} f_Q \]
for all $f$ in the dense subspace, with no convergence issues because all but finitely many terms are zero.
Let $T_Q$ be the bounded operator on $L^2(\R)$ which satisfies 
\[ \widehat{T_Q f} (\xi) := \widehat {f} (\xi) \frac{\chi_{Q}(\xi)}{\widehat{\varphi}(|Q|^{-1}(\xi - \xi_Q))} \]
for all $f \in L^2(\R)$; $\widehat{\varphi}$ being bounded below on $[-1/2,1/2]$ is precisely what makes $T_Q$ bounded on $L^2(\R)$. The $T_Q$ are mutually orthogonal, uniformly bounded operators satisfying
\[ \int (T_Q f)(y) \varphi_Q(x-y) dy = |Q|^{-\frac{1}{2}} f_Q(x) \]
for every $Q \in {\mathcal Q}_\lambda$ and every $f \in L^2(\R)$ (note: the integrand is in $L^1$ because $T_Q f$ and $\varphi_Q$ belong to $L^2(\R)$). For any $\xi \in \R$ in the interior of a $Q \in \mathcal Q_\lambda$, let
\[ V f (y, \xi) := |Q|^{-1/2} (T_Q f)(y). \]
The function $V f(y,\xi)$ is constant as a function of $\xi$ on the interior of any $Q \in {\mathcal Q}_\lambda$ and is square integrable as a function of $y$.
The identity
 \[ f(x) = \int_{\R \times \R} V f(y,\xi) \varphi_\xi (x-y) ~ dy d \xi \]
must hold because $\hat f$ is compactly supported, which puts the integrand in $L^1(\R \times \R)$ and permits decomposing the $\xi$ integral into a sum over the $Q \in {\mathcal Q}_\lambda$, i.e., 
\begin{align*}
  \int_{\R \times \R} & V f(y,\xi) \varphi_\xi (x-y) ~ dy d \xi = \sum_{Q \in \mathcal Q_\lambda} |Q| \int |Q|^{-1/2} (T_Q f)(y) \varphi_Q(x-y) dy \\
 &  = \sum_{Q \in \mathcal Q_\lambda} f_Q(x) = f(x). 
  \end{align*}
Finally, observe that the same decomposition of the $\xi$ integral gives
\[ \int_{\R^d \times \R^d} |V f(y,\xi)|^2 dy d \xi = \sum_{Q \in \mathcal Q_\lambda} |Q| \int \left| |Q|^{-1/2} (T_Q f)(y) \right|^2 dy \leq C ||f||^2_2 \]
where $C := \sup_{|\xi| \leq 1/2} |\widehat \varphi(\xi)|^{-1}$ (which is independent of $\lambda$).
\end{proof}

The functions $\varphi_\xi(x)$ satisfy a differential inequality which is of the sort found in H\"{o}rmander's exotic symbol class $S^0_{1/2,1/2}$, which in some sense exhibits a kind of scaling which falls between the windowed Fourier transform and a Littlewood-Paley decomposition. In our case, the important feature here is that derivatives with respect to $x$ lose roughly a factor of $|\xi|^{1/2}$:
\begin{proposition}
Suppose that, in addition to the hypotheses of Lemma \ref{systemlemma}, $\varphi$ is supported on $[-1/4,1/4]$.  \label{wavepacket2}
For any $\xi \in \R$, let $r := (\max \{ |\lambda|, |\xi| \})^{-1/2}$. For any $k \geq 0$, there is a constant $C_{k}$ depending only on $k$ and $\varphi$ such that
\begin{equation} \left|  \partial_x^k \left( e^{- 2 \pi i x \xi} \varphi_{\xi} (x) \right) \right| \leq \begin{cases} C_{k} r^{-\frac{1}{2} - k}  & x \in \left[ - \frac{r}{2}, \frac{r}{2} \right]  \\
0 & \mbox{ otherwise} \end{cases}. \label{derivest} \end{equation}
\end{proposition}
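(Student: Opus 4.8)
The plan is a direct computation; the $S^0_{1/2,1/2}$-type scaling in \eqref{derivest} is just the chain rule. I would first dispatch the trivial case: if $\xi$ is a boundary point of some $Q \in \mathcal{Q}_\lambda$ then $\varphi_\xi \equiv 0$ and \eqref{derivest} holds with any constant. Otherwise $\xi$ lies in the interior of a unique $Q \in \mathcal{Q}_\lambda$, and by the definition of $\varphi_Q$ one has
\begin{equation*}
e^{-2\pi i x \xi} \varphi_\xi(x) = |Q|^{1/2} e^{2\pi i x (\xi_Q - \xi)} \varphi(|Q| x),
\end{equation*}
so everything reduces to differentiating this explicit expression and then converting $|Q|$-dependence into $r$-dependence by means of \eqref{boxsize}.

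For the support I would note that, since $\varphi$ is supported in $[-1/4,1/4]$, the right-hand side above vanishes unless $|x| \le 1/(4|Q|)$. Applying \eqref{boxsize} with $\xi \in Q$ gives $\tfrac19 |Q|^2 \le \max\{|\lambda|,|\xi|\} = r^{-2} \le 4 |Q|^2$, hence $\tfrac12 r^{-1} \le |Q| \le 3 r^{-1}$; in particular $1/(4|Q|) \le r/2$, which is exactly the support asserted in \eqref{derivest}. The choice of $[-1/4,1/4]$ in the hypothesis is precisely what makes this land inside $[-r/2,r/2]$ rather than a slightly larger interval.

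For the pointwise size I would expand by the Leibniz rule,
\begin{equation*}
\partial_x^k \bigl( e^{2\pi i x(\xi_Q - \xi)} \varphi(|Q| x) \bigr) = \sum_{j=0}^k \binom{k}{j} \bigl(2\pi i (\xi_Q - \xi)\bigr)^{k-j} |Q|^j e^{2\pi i x(\xi_Q - \xi)} \varphi^{(j)}(|Q| x),
\end{equation*}
and then use that $\xi_Q$ is the center of $Q$ and $\xi \in Q$, so $|\xi_Q - \xi| \le |Q|/2$; thus the $j$-th summand is at most $|Q|^k \binom{k}{j} \pi^{k-j} \|\varphi^{(j)}\|_\infty$ in absolute value. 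Summing over $j$ and restoring the prefactor $|Q|^{1/2}$ yields $|\partial_x^k(e^{-2\pi i x \xi}\varphi_\xi(x))| \le C'_k |Q|^{k+1/2}$ with $C'_k$ depending only on $k$ and $\varphi$, and $|Q| \le 3 r^{-1}$ then upgrades this to $C_k r^{-1/2-k}$ with $C_k := 3^{k+1/2} C'_k$.

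I do not expect a genuine obstacle here: the statement is arranged so that the result falls out of the product rule. The only point requiring attention is uniformity in $\lambda$, which is automatic because \eqref{boxsize} bounds $|Q|$ in terms of $\max\{|\lambda|,|\xi|\}$ with absolute constants, independent of $\lambda$; beyond that it is just the minor bookkeeping of matching the numerical constant $1/4$ in the support of $\varphi$ to the target window $[-r/2,r/2]$.
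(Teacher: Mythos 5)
Your proof is correct and follows essentially the same route as the paper's: the support claim comes from the support of $\varphi$ together with the lower bound on $|Q|$ from \eqref{boxsize}, and the derivative bound comes from the Leibniz rule, the estimate $|\xi_Q - \xi| \lesssim |Q|$, and the comparability $|Q| \approx r^{-1}$. The only (immaterial) difference is that you use the sharper $|\xi_Q - \xi| \le |Q|/2$ where the paper is content with $|\xi_Q - \xi| \le |Q|$.
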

\begin{proof}
By definition of $\varphi_\xi(x)$, it must vanish identically when $|x| \geq (4 |Q|)^{-1}$ for some interval $Q \in {\mathcal Q}_\lambda$ containing $\xi$, and by \eqref{boxsize}, $|Q|^{-1} \leq 2 r$, so $|x| \leq r/2$ is necessary for the left-hand side of \eqref{derivest} to be nonzero.
By the product rule,
\begin{align*}
|Q|^{\frac{1}{2}} &\partial_x^k \left( e^{2 \pi i (\xi_Q - \xi) x} \varphi ( |Q| x) \right) \\
& = |Q|^{\frac{1}{2} + k} \sum_{0 \leq \beta \leq k} c_{k,\beta} \left( \frac{2 \pi i (\xi_Q - \xi) }{|Q|} \right)^{\beta} e^{2\pi i (\xi_Q - \xi)x}  (\partial_x^{k-\beta} \varphi)( |Q| x) 
\end{align*}
for some constants $c_{k,\beta}$ depending only on $k$ and $\beta$. The desired inequality \eqref{derivest} follows from this identity and the triangle inequality
because $|\xi_Q - \xi| \leq |Q|$, as both $\xi$ and $\xi_Q$ belong to $Q$, and because $|Q| \leq 3 r^{-1}$ by \eqref{boxsize}.
\end{proof}
Interestingly, while there are several authors who have studied such scaling as it applies to symbols (see Stein \cite{steinha} for a more classical exposition;  Beltran and Bennett \cite{bb2017} and Beltran \cite{beltran2019} relate this kind of scaling to novel geometric maximal function inequalities), there do not appear to be any previous instances of a decomposition of this sort being applied to general phases $\Phi$ or in geometric settings as we have here. In Section \ref{proofsec} we will see that the decomposition (regarding $V f$ as an analysis operator) is so efficient that it essentially diagonalizes \eqref{myfunc}---at no point do we even need a $TT^*$ argument or to employ orthogonality of any of the various terms we encounter. This strongly suggests that a similar decomposition could greatly simplify many existing proofs for things like Sobolev estimates for Fourier Integral Operators.

Another natural point is to consider whether it is possible to construct a ``nice'' discrete system which in some way mirrors the decomposition, e.g., whether it is possible to construct a family of functions $\{\psi_n\}_{n=1}^{\infty}$ whose Fourier transforms $\widehat \psi_n$ are adapted to boxes $[-n/2,n/2]^d$ such that the functions $\psi_{Q,k} := e^{2 \pi i \xi_{Q} \cdot x} \psi_n(x - n^{-1}k)$ (where $n$ is taken to be the side length of $Q$ and $k \in \Z^d$) form a Parseval frame for $L^2(\Z^d)$ as $Q$ ranges over $\mathcal Q_\lambda$ and $k$ ranges over $\Z^d$.  Taking $\widehat \psi_n := \chi_{[-n/2,n/2]^d}$ accomplishes this task, but the functions $\psi_n$ are poorly localized in space despite being highly localized in frequency. One can see without difficulty, however, that under the structural constraints just suggested, it is never be possible for all the $\psi_n$ to be compactly supported. Using the characterization of discrete shift-invariant systems by Hern\'andez, Labate, and Weiss \cite{hlw2002}, one can verify that when $n_0$ is the smallest side length of a box in $\mathcal Q_{\lambda}$, the necessary condition
\[ \sum_{Q \in {\mathcal Q}_\lambda, |Q| = n_0^d} \widehat \psi_{n_0}(\xi + \xi_Q) \overline{\widehat \psi_{n_0}(\xi + \xi_Q + (n_0,\ldots,n_0))} = 0 \mbox{ for almost every } \xi \] 
forces the summand to be zero almost everywhere because one may iteratively use the sum as a linear recurrence relation for a sum over a lower-dimensional product set and observe that functions satisfying linear recurrence relations for shifts must be zero almost everywhere if they are integrable.
There are various simple ways that one may loosen the structural constraints (e.g., by not insisting that boxes of the same size be represented by frequency shifts of the same function and by mildly oversampling translations) to construct a system of functions $\psi_{Q,k}$ which are all Schwartz functions. It turns out however, that for the present purposes it is vastly simpler to analyze the functional \eqref{myfunc} using the continuous system from Lemma \ref{systemlemma} than it would be to use any analogous discrete system, even if one could find such a discrete system with no ``Schwartz tails.''

\section{Stationary Phase}
\label{spsec}

We turn now to the task of proving a few useful inequalities which will be applied repeatedly in the next section. The first is a variation on the standard stationary phase estimate:
\begin{lemma}
For each positive integer $N$, there is a constant $C_N$ such that for every smooth manifold $M$ with measure $d \sigma$ of smooth positive density, every pair $(\varphi,\psi)$ of $C^N$ real-valued functions on $M$ with $\psi$ compactly supported, and every nonzero complex number $K$, \label{ibplemma}
\begin{equation} \left| \int e^{i \varphi} \psi d \sigma \right| \leq \frac{C_N}{|K|^N} \sum_{j = 0}^N \int \left| (X^*)^{j} \psi \right| \left[ | X \varphi - K |^{N - j
} + \sum_{\ell=2}^{N-j} | X^{\ell} \varphi |^{\frac{N-j}{\ell}} \right] d \sigma, \label{mainibp} \end{equation}
where $X$ is any $C^N$ vector field on $M$ and $X^*$ is the first-order differential operator dual to $X$.
\end{lemma}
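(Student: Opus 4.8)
The plan is to run a standard integration-by-parts scheme, but -- and this is the one nonroutine point -- to amortize each step against the \emph{constant} $K$ rather than against $X\varphi$, which is not assumed to be bounded away from zero. Write $J[\psi] := \int e^{i\varphi}\psi\, d\sigma$, so $J$ is a linear functional on $C^N$ functions with $|J[\psi]| \le \int |\psi|\, d\sigma$ because $|e^{i\varphi}| = 1$. From $X(e^{i\varphi}) = i(X\varphi)e^{i\varphi}$ one gets the pointwise identity
\[ e^{i\varphi} \;=\; \frac{1}{iK}\,X\!\left(e^{i\varphi}\right) \;+\; \frac{K - X\varphi}{K}\,e^{i\varphi}, \]
and integrating the first term by parts against $\psi$ (the boundary term vanishing since $\psi$ is compactly supported) yields the basic recursion
\[ J[\psi] \;=\; \frac{1}{iK}\,J\!\left[X^*\psi\right] \;-\; \frac{1}{K}\,J\!\left[(X\varphi - K)\,\psi\right]. \]
The idea is simply to apply this $N$ times.

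Unwinding the recursion expresses $J[\psi]$ as a sum of finitely many terms $c_w K^{-N} J[g_w]$, the number of terms and the magnitudes $|c_w|$ being bounded in terms of $N$ alone, where $g_w$ is obtained from $\psi$ by a length-$N$ word in the two operations $u \mapsto X^* u$ and $u \mapsto (X\varphi - K)u$. To read off the structure of the $g_w$ I would use the Leibniz-type identity $X^*(hg) = h\,X^*g - (Xh)g$, valid for $C^1$ functions $h$, which follows directly from the definition of the dual operator; note that it expresses $X^*$ on a product entirely through $X^*$ on one factor and the \emph{plain} vector field $X$ on the other, so no spurious factors involving the density of $d\sigma$ (which are not controlled by the hypotheses) are ever produced. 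Iterating this identity shows that each $g_w$ is a linear combination, with $N$-dependent coefficients, of monomials of the form
\[ \left( \prod_{\ell \ge 2} (X^\ell\varphi)^{m_\ell} \right)(X\varphi - K)^{m_1}\,(X^*)^j\psi . \]
Moreover, assigning to such a monomial the weight $j + \sum_{\ell \ge 1}\ell\, m_\ell$ (with $m_1$ the exponent of $X\varphi - K$), one checks that each of the two basic operations raises this weight by exactly one; since $\psi$ has weight $0$, every monomial produced after $N$ steps satisfies $j + \sum_{\ell \ge 1}\ell\,m_\ell = N$. In particular $j \le N$ and every derivative of $\varphi$ occurring has order at most $N$, so all terms are well defined under the stated $C^N$ hypotheses.

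It then remains to bound a single monomial. Using $|J[\,\cdot\,]| \le \int |\,\cdot\,|\, d\sigma$ and the triangle inequality, matters reduce to the pointwise inequality
\[ \left| \prod_{\ell\ge2}(X^\ell\varphi)^{m_\ell} \right|\,|X\varphi - K|^{m_1} \;\le\; |X\varphi - K|^{N-j} \,+\, \sum_{\ell=2}^{N-j} |X^\ell\varphi|^{\frac{N-j}{\ell}} , \]
to be verified whenever $\sum_{\ell \ge 1}\ell\,m_\ell = N - j =: m$ (the case $m = 0$ being trivial). This is the weighted arithmetic--geometric mean inequality: set $a_1 := X\varphi - K$ and $a_\ell := X^\ell\varphi$ for $\ell \ge 2$, write the left side as $\prod_\ell (|a_\ell|^{m/\ell})^{\ell m_\ell/m}$, and observe that the exponents $\ell m_\ell/m$ are nonnegative and sum to $1$, so that $\prod_\ell |a_\ell|^{m_\ell} \le \sum_\ell (\ell m_\ell/m)\,|a_\ell|^{m/\ell} \le \sum_{\ell:\,m_\ell>0}|a_\ell|^{m/\ell}$; and $m_\ell > 0$ forces $\ell \le m = N - j$. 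Summing these bounds over the boundedly many monomials and words gives \eqref{mainibp} with a constant $C_N$ that, by construction, depends only on $N$.

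I expect the only real work to be in the second step: checking carefully that the iterated integration by parts never leaves the claimed family of monomials and correctly tracking the weight, and then confirming that every constant that arises (the number of terms, the coefficients $c_w$, and the factor lost in the weighted AM--GM step) is controlled by $N$ alone, with no hidden dependence on $M$, $d\sigma$, $X$, $\varphi$, or $\psi$. The analytic ingredients -- the integration-by-parts identity and the triangle inequality -- are entirely routine.
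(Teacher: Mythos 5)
Your proposal is correct and follows essentially the same route as the paper: the same amortized integration-by-parts recursion against the constant $K$ (the paper writes it as iterating the operator $L\psi = X^*\psi - (iX\varphi - \tilde K)\psi$ with $\tilde K = iK$), the same Leibniz identity $X^*(fg) = (X^*f)g - f(Xg)$ to keep the terms within the stated family of monomials, the same weight bookkeeping $\alpha_0 + \sum_\ell \ell\alpha_\ell = N$, and the same weighted AM--GM step at the end. No gaps.
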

\begin{proof}
By definition of $X^*$, the chain rule, and integration by parts,
\[ \int (i X \varphi) e^{i \varphi} \psi d \sigma = \int \left( X e^{i \varphi} \right) \psi d \sigma = \int e^{i \varphi} (X^* \psi) d \sigma. \]
As a consequence, for any nonzero complex number $\tilde K$,
\[ \int e^{i \varphi} \psi d \sigma = \frac{1}{\tilde K} \int e^{i \varphi} \left[ X^* \psi - (i X \varphi - \tilde K) \psi \right] d \sigma . \]
Consider the differential operator $L \psi := \left[ X^* \psi - (i X \varphi - \tilde K) \psi \right]$.
By a repeated application of the above computation, it must be the case that
\begin{equation} \int e^{i \varphi} \psi d \sigma = \tilde K^{-N} \int e^{i \varphi} L^N \psi ~ d \sigma \label{mainibp0} \end{equation}
for any positive $N$ and $C^N$ functions $\varphi$ and $\psi$. 
Let $A_N \subset \Z_{\geq 0}^{N+1}$ be the set of multiindices $\alpha := (\alpha_0,\ldots,\alpha_N)$ such that $\alpha_0 + \sum_{\ell=1}^N \ell \alpha_\ell = N$.   By induction on $N$, we claim that
\begin{equation} L^N \psi = \sum_{\alpha \in A_N} C_{N,\alpha} ((X^{*})^{\alpha_0} \psi ) (i X \varphi - \tilde K)^{\alpha_1} \prod_{\ell=2}^{N} (X^\ell \varphi)^{\alpha_\ell} \label{theclaim} \end{equation}
for some constants $C_{N,\alpha}$ depending only on $N$ and $\alpha$. For convenience, given any finite sequence $\alpha := (\alpha_0,\ldots,\alpha_{N'})$, let
\[ T_\alpha := ((X^{*})^{\alpha_0} \psi ) (i X \varphi - \tilde K)^{\alpha_1} \prod_{\ell=2}^{N'} (X^\ell \varphi)^{\alpha_\ell}. \]
To establish the claim, it suffices to show that the application of $L$ to any $T_\alpha$ with $\alpha \in A_N$ yields a linear combination of terms $T_\beta$ with $\beta \in A_{N+1}$ in such a way that the coefficients depend only on $\alpha$.  First, the product rule gives that
\begin{align*} X^*  T_\alpha  = & ~ ((X^*)^{\alpha_0 + 1} \psi ) (i X \varphi - \tilde K)^{\alpha_1} \prod_{\ell=2}^{N} (X^\ell \varphi)^{\alpha_j} \\ & -  
 i \alpha_1 ((X^{*})^{\alpha_0} \psi )  ( i X \varphi - \tilde K)^{\alpha_1 - 1} (X^2 \varphi) \prod_{\ell=2}^{N} (X^\ell \varphi)^{\alpha_\ell} \\ & - ((X^{*})^{\alpha_0} \psi )  (i X \varphi - \tilde K)^{\alpha_1} \sum_{\ell'=2}^{N} \alpha_{\ell'} (X^{\ell'+1} \varphi) (X^{\ell'} \varphi)^{\alpha_{\ell'} - 1} \mathop{\prod_{\ell=2}^N}_{\ell \neq \ell'} (X^\ell \varphi)^{\alpha_\ell} 
 \end{align*}
 since $X^* (fg) = (X^* f) g - f (Xg)$ for any smooth functions $f$ and $g$.
Each term on the right-hand side is exactly a term $T_\beta$ for $\beta \in A_{N+1}$ {(because the terms on the second and third lines have the exponent $\alpha_\ell$ decreased by $1$ and the exponent $\alpha_{\ell + 1}$ increased by $1$ for some $\ell \geq 1$)} and the coefficients clearly only depend on $\alpha$.   Likewise $(i X \varphi - \tilde K) T_{\alpha}$ is equal to $T_{\beta}$ for some $\beta \in A_{N+1}$. This establishes \eqref{theclaim}. The passage to \eqref{mainibp} follows from \eqref{mainibp0} and \eqref{theclaim} by the triangle inequality and the inequality for arithmetic and geometric means:
\begin{align*}
\left| \int e^{ i \varphi} \psi d \sigma \right|   \leq  & \sum_{\alpha \in A_N} \frac{|C_{N,\alpha}|}{|\tilde K|^N} \int | (X^*)^{\alpha_0} \psi| | i X \varphi  - \tilde K|^{\alpha_1} \prod_{\ell=2}^N |X^\ell \varphi|^{\alpha_\ell} d \sigma \\
 \leq  \sum_{\alpha \in A_N}  \frac{|C_{N,\alpha}|}{|\tilde K|^N} & \int \frac{| (X^*)^{\alpha_0} \psi|}{N - \alpha_0}  \left( \alpha_1 | i X \varphi  - \tilde K|^{N - \alpha_0 } + \sum_{\ell=2}^N \ell \alpha_\ell |X^\ell \varphi|^{\frac{N - \alpha_0}{\ell} } \right) d \sigma.
\end{align*}
Finally, \eqref{mainibp} follows by consolidating like terms and setting $\tilde K := i K$.
\end{proof}

The next result complements the stationary phase lemma and applies primarily to non-oscillatory integrands. It is needed because the trivial ``size'' inequalities one typically sees are formulated on $\R^n$ rather than on manifolds $M$:
\begin{proposition}
Suppose $M \subset B_1 = (-b_1, b_1)^{2d}$ is the zero set of a function $\rho$ which satisfies the inequalities \eqref{finiterhonorms} and \eqref{implicit} and that $f$ is a measurable function on $B_1$ supported in a product of intervals $I := I_1 \times \cdots \times I_{2d}$. Then for any $j_0 \in \{1,\ldots,2d\}$,\label{iftprop}
\begin{equation} \left| \int_M f ~ d \sigma \right| \lesssim ||f||_{L^\infty(M)} \mathop{\prod_{j =1}^{2d}}_{j \neq j_0} |I_j| \label{ift} \end{equation}
where $d \sigma$ is Lebesgue measure on $M$.
\end{proposition}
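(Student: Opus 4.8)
The plan is to realize $M$ as a graph over the coordinate hyperplane obtained by deleting the $j_0$-th coordinate, and then to bound the resulting surface-measure Jacobian by an admissible constant. Write $\bar{x} := (x_j)_{j \neq j_0}$ and let $\pi : \R^{2d} \to \R^{2d-1}$ be the projection deleting the $j_0$-th coordinate. Since $B_1$ is a product of intervals, it is connected, and $|\partial_{j_0}\rho| \geq (C_\rho')^{-1} > 0$ on $B_1$ by \eqref{implicit}, so $\partial_{j_0}\rho$ has constant sign on $B_1$. Hence, for every fixed $\bar{x}$, the one-variable function obtained by freezing $\bar{x}$ and letting the $j_0$-th coordinate range over $(-b_1,b_1)$ is strictly monotone, so it has at most one zero. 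Therefore $\pi$ is injective on $M$: letting $\Omega := \pi(M) \subset \pi(B_1)$ and defining $\Psi(\bar{x})$ to be the unique value placed in the $j_0$-th slot for which the point lies in $M$, the Implicit Function Theorem (using $\partial_{j_0}\rho \neq 0$) shows $\Omega$ is open and $\Psi : \Omega \to (-b_1,b_1)$ is smooth, with $M = \{(\bar{x},\Psi(\bar{x})) : \bar{x} \in \Omega\}$ (inserting $\Psi(\bar{x})$ into the $j_0$-th coordinate).

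Next I would invoke the standard surface-area formula
\[ \int_M f \, d\sigma = \int_{\Omega} f(\bar{x},\Psi(\bar{x})) \left( 1 + |\nabla\Psi(\bar{x})|^2 \right)^{1/2} d\bar{x} , \]
and bound the Jacobian factor. Differentiating $\rho(\bar{x},\Psi(\bar{x})) \equiv 0$ gives $\partial_k \Psi = -(\partial_k\rho)/(\partial_{j_0}\rho)$ for each $k \neq j_0$, so by \eqref{finiterhonorms} and \eqref{implicit}, $|\partial_k \Psi| \leq C_\rho C_\rho'$, whence $(1+|\nabla\Psi|^2)^{1/2} \leq (1 + (2d-1)(C_\rho C_\rho')^2)^{1/2} =: C$, an admissible constant. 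Since $f$ is supported in $I = I_1 \times \cdots \times I_{2d}$, the integrand above vanishes unless $x_j \in I_j$ for all $j \neq j_0$, so it is supported in a subset of $\prod_{j \neq j_0} I_j$, which has Lebesgue measure at most $\prod_{j\neq j_0} |I_j|$. Therefore
\[ \left| \int_M f \, d\sigma \right| \leq C \, \|f\|_{L^\infty(M)} \prod_{j \neq j_0} |I_j| , \]
which is \eqref{ift}.

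No step here presents a serious obstacle; the only point requiring a little care is the passage from the purely local Implicit Function Theorem to a single global graph representation of $M$, which is handled cleanly by the constant-sign observation for $\partial_{j_0}\rho$ that opens the argument. (Alternatively one could cover the support of $f$ by finitely many small boxes on which the graph representation holds and sum, at the cost of bookkeeping the overlaps, but the global argument is more economical.)
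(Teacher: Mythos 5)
Your proof is correct and follows essentially the same route as the paper's: represent $M$ as a graph $x_{j_0} = \Psi(\bar x)$ using \eqref{implicit}, bound $|\partial_k \Psi| = |\partial_k\rho / \partial_{j_0}\rho| \leq C_\rho C_\rho'$, and estimate the surface-measure integral over the projected support $\prod_{j\neq j_0} I_j$. Your extra care about the domain $\Omega = \pi(M)$ possibly being a proper subset of $\pi(B_1)$ is a minor refinement of precision, not a different argument.
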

\begin{proof}
By virtue of the assumption \eqref{implicit}, the manifold $M$ intersects any line parallel to a coordinate axis in $\R^{2d}$ at most once, which means that for any $j_0$, $M$ is the graph of some function $x_{j_0} = \Psi_{j_0}(x_1,\ldots,\widehat{x_{j_0}},\ldots,x_{2d})$ ($\widehat{\cdot}$ denotes omission) on $U_{j_0} := (-b_1,b_1)^{2d-1}$. By \eqref{finiterhonorms} and \eqref{implicit}, the derivatives $\partial_{j} \Psi_{j_0}$ have magnitude at most $C_\rho C_\rho'$. 
Since the Radon-Nikodym derivative of Lebesgue measure $d \sigma$ on the graph with respect to Lebesgue measue in coordinates $(x_1,\ldots,\widehat{x_{j_0}},\ldots,x_{2d})$ is exactly $(1 + |\nabla \Psi_{j_0}|^2)^{1/2}$, it follows that any measurable function $f$ supported on a product of intervals $I_1 \times \cdots \times I_{2d} \subset B_1$ will satisfy
\begin{align*}
\left| \int_M f d \sigma \right| & = \left| \int_{U_{j_0}} (f \circ \Psi_{j_0})  (1 + |\nabla \Psi_{j_0}|^2)^{1/2} dx_{1} \cdots \widehat{ d x_{j_0}} \cdots d x_{2d}  \right| \\
 \leq  \int_{U_{j_0}}&  ||f||_{L^\infty(M)} \prod_{j=1}^{2d} \chi_{I_j} (x_j) (1 + (2d-1) (C_\rho C_\rho')^2)^{\frac{1}{2}} dx_{1} \cdots \widehat{ d x_{j_0}} \cdots d x_{2d},
\end{align*}
which gives exactly \eqref{ift} by Fubini.
\end{proof}
We conclude the section by stating the following corollary, which combines our size and oscillation estimates:
\begin{corollary}
For $M$ as in Proposition \ref{iftprop} and $\varphi, \psi$, and $X$ as in Lemma \ref{ibplemma}, if $\psi$ is supported on a product of intervals $I := I_1 \times \cdots \times I_{2d} \subset B_1$ and if $N$ is any fixed positive integer, then
\begin{equation} 
\begin{split} 
\left| \int_M e^{i \varphi} \psi d \sigma \right| \lesssim \frac{C_N |I|}{|K|^N |I_{j_0}|} \sum_{k=0}^N || (X^*)^k \psi ||_{L^\infty(M \cap I)} &  \left[ \vphantom{\sum_{\ell=2}^{N-k}} \right.  || X \varphi - K ||_{L^\infty(M \cap I)}^{N-k}  \\
 & \left. + \sum_{\ell=2}^{N-k} ||X^\ell \varphi ||_{L^\infty(M \cap I)}^{\frac{N-k}{\ell}} \right]
 \end{split}  \label{fullibp}
\end{equation}
for any complex number $K$ and any $j_0 \in \{1,\ldots,2d\}$.
\end{corollary}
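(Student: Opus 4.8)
The plan is to obtain \eqref{fullibp} by chaining together Lemma \ref{ibplemma} and Proposition \ref{iftprop}, with essentially no new computation. First I would dispose of the degenerate case $K=0$, for which the right-hand side of \eqref{fullibp} is $+\infty$ and the inequality is vacuous. For $K \neq 0$, I would apply Lemma \ref{ibplemma} to the given data $(\varphi,\psi,X,K,N)$: the manifold $M$ of Proposition \ref{iftprop} carries Lebesgue measure $d\sigma$, which is a smooth positive density; $\varphi$ and $\psi$ are $C^N$ and $\psi$ is compactly supported since it is supported in the bounded set $I = I_1 \times \cdots \times I_{2d} \subset B_1$; and $X$ is a $C^N$ vector field. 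This gives
\begin{equation*}
\left| \int_M e^{i\varphi} \psi\, d\sigma \right| \le \frac{C_N}{|K|^N} \sum_{j=0}^N \int_M \left| (X^*)^j \psi \right| \left[ |X\varphi - K|^{N-j} + \sum_{\ell=2}^{N-j} |X^\ell \varphi|^{\frac{N-j}{\ell}} \right] d\sigma .
\end{equation*}

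Next, for each $j \in \{0,\dots,N\}$ I would set $g_j := \left| (X^*)^j \psi \right| \left[ |X\varphi - K|^{N-j} + \sum_{\ell=2}^{N-j} |X^\ell \varphi|^{\frac{N-j}{\ell}} \right]$ and observe that $g_j$ is nonnegative, measurable, and—since applying a differential operator does not enlarge supports—supported in $I$. Hence Proposition \ref{iftprop} applies to $f = g_j$ with the chosen index $j_0$, yielding $\int_M g_j\, d\sigma \lesssim \|g_j\|_{L^\infty(M)} \prod_{j' \neq j_0} |I_{j'}|$. Because $g_j$ vanishes off $I$, we have $\|g_j\|_{L^\infty(M)} = \|g_j\|_{L^\infty(M \cap I)}$, and I would then bound this norm by the elementary facts that the supremum of a product is at most the product of the suprema, the supremum of a sum is at most the sum of the suprema, and $\big\| |h|^{a} \big\|_\infty = \|h\|_\infty^{a}$ for $a \geq 0$; this reproduces exactly the bracketed factor of \eqref{fullibp} multiplied by $\|(X^*)^j\psi\|_{L^\infty(M \cap I)}$.

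Finally I would assemble the estimate: since $\prod_{j' \neq j_0} |I_{j'}| = |I|/|I_{j_0}|$, summing the bounds for $g_0, \dots, g_N$ and reinstating the prefactor $C_N/|K|^N$ gives \eqref{fullibp} verbatim (with the summation index relabeled $k$). There is no genuine obstacle here; the only points deserving a word of care are that $\mathrm{supp}\,(X^*)^j\psi \subset \mathrm{supp}\,\psi \subset I$, so that Proposition \ref{iftprop} is legitimately applicable to every $g_j$, and that the $L^\infty$ norm passes through products, sums, and powers as claimed—both of which are immediate. The implied constant depends only on the admissible constants, entering solely through Proposition \ref{iftprop}; the factor $C_N$ and the $N$-dependence of the number of summands are kept outside the $\lesssim$ so that the dependence on $N$ remains explicit.
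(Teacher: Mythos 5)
Your proposal is correct and is precisely the intended argument: the paper states this corollary without proof as an immediate combination of Lemma \ref{ibplemma} and Proposition \ref{iftprop}, which is exactly the chaining you carry out (apply the integration-by-parts bound, note each resulting integrand is supported in $I$, and invoke the size estimate termwise, passing the $L^\infty$ norm through products, sums, and powers). The side remarks about the $K=0$ case and the preservation of supports under $(X^*)^j$ are accurate and harmless.
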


\section{Proof of Theorem \ref{mainthm}}
\label{proofsec}
\subsection{Main kernel estimate and cases of rapid decay}
\label{schwartzsec}

To estimate the quantity $I_\lambda(f_1,\ldots,f_{2d})$ defined by \eqref{myfunc}, each function $f_j$ will be expressed via the expansion \eqref{expansion}. To that end, fix $\varphi$ to be a function of one real variable satisfying the hypotheses of Lemma \ref{systemlemma} and Proposition \ref{wavepacket2}. Using \eqref{expansion}, it suffices to study
\begin{equation} \mathcal I(y,\xi) := \int_M e^{i \lambda \Phi(x)} \left[ \prod_{j=1}^{2d} \varphi_{\xi_j}(x_j - y_j) \right] a(x) d \sigma(x) \label{kerneldef} \end{equation}
on $\R^{2d} \times \R^{2d}$ and prove an inequality of the form
\[ \left| \int_{(\R \times \R)^{2d}} \mathcal I(y,\xi) \prod_{j=1}^{2d} f_j(y_j,\xi_j) dy_1 d \xi_1 \cdots d y_{2d} d \xi_{2d} \right| \lesssim |\lambda|^{-\frac{d-1}{2}} \prod_{j=1}^{2d} ||f_j||_{L^2(\R \times \R)} \]
for arbitrary $f_1,\ldots,f_{2d} \in L^2(\R \times \R)$.

For each $\xi \in \R^{2d}$, let $r_j := (\max \{ |\lambda|, |\xi_j| \})^{-1/2}$ and $r := \min \{r_1,\ldots,r_{2d}\}$. Because $\varphi$ satisfies the hypotheses of Proposition \ref{wavepacket2}, each $\varphi_{\xi_j}$ will be supported in the interval $[-r_j/2,r_j/2]$. Points $x \in M$ may always be assumed to belong to the support of the amplitude function $a \subset B_0$, so the constraint \eqref{lambdaisntsmall} and the inequalities $r_j \leq \lambda^{-1/2}$ imply that $\mathcal I(y,\xi) = 0$ when $y \not \in B_1$.

For each $i =1,\ldots,2d$, let $X_i$ be the smooth vector field on $M$ given by
\begin{equation} X_i := \partial_{i} - \frac{\partial_{i} \rho}{| \nabla \rho|^2} \sum_{j=1}^{2d} (\partial_{j} \rho) \partial_{j}. \label{thevec} \end{equation}
The central calculation in the proof of Theorem \ref{mainthm} is to apply Lemma \ref{ibplemma} to estimate the magnitude of $\mathcal I(y,\xi)$:
\begin{proposition}
Let $N \leq 2d+2$ be a positive integer. Assuming  \eqref{finiterhonorms}, \eqref{finitephinorms}, \eqref{finiteanorms},  \eqref{implicit}, and \eqref{lambdaisntsmall}, it follows that \label{firstsize}
\begin{equation} |\mathcal I (y ,\xi) | \lesssim \left( 1 + \frac{r^2}{\max_j r_j} \left( \sum_{i=1}^{2d} |X_i ( \lambda \Phi + 2 \pi \xi \cdot x)|_{y}|^2 \right)^{\frac{1}{2}}   \right)^{-N}  r_{j_0}^{-1} \prod_{j=1}^{2d} r_j^{\frac{1}{2}} \label{sizeest} \end{equation}
for every $j_0 \in \{1,\ldots,2d\}$. Moreover, if ${\mathcal I}(y,\xi) \neq 0$ then $|\rho(y)| \lesssim \max_j r_j$.
\end{proposition}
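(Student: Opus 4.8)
The plan is to strip the oscillation out of $\mathcal{I}(y,\xi)$ and apply the corollary \eqref{fullibp} of Lemma \ref{ibplemma} with a single, carefully scaled tangential vector field. Writing $\varphi_{\xi_j}(x_j-y_j)=e^{2\pi i(x_j-y_j)\xi_j}\psi_j(x_j-y_j)$ with $\psi_j(t):=e^{-2\pi i t\xi_j}\varphi_{\xi_j}(t)$, the kernel \eqref{kerneldef} becomes $\mathcal{I}(y,\xi)=e^{-2\pi i\xi\cdot y}\int_M e^{i\varphi}\psi\,d\sigma$ with phase $\varphi:=\lambda\Phi(x)+2\pi\xi\cdot x$ and amplitude $\psi(x):=a(x)\prod_j\psi_j(x_j-y_j)$. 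We may assume $\mathcal{I}(y,\xi)\ne 0$, so that $y\in B_1$ and $\psi$ is supported in a product $I:=I_1\times\cdots\times I_{2d}\subset B_1$ in which each $I_j:=[y_j-r_j/2,y_j+r_j/2]\cap[-b_0,b_0]$ has length $|I_j|\le r_j$ (Proposition \ref{wavepacket2}). Since Proposition \ref{wavepacket2} with $k=0$ gives $\|\psi\|_{L^\infty(M)}\lesssim\prod_j r_j^{-1/2}$, Proposition \ref{iftprop} applied with $j_0$ already yields the ``trivial'' bound $|\mathcal{I}(y,\xi)|\lesssim\|\psi\|_{L^\infty}\prod_{j\ne j_0}r_j\lesssim r_{j_0}^{-1}\prod_j r_j^{1/2}$. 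Because $\min\{1,t^{-N}\}\lesssim(1+t)^{-N}$, it now suffices to prove the same bound with an extra factor $\big(\frac{r^2}{\max_j r_j}G\big)^{-N}$, where $g_i:=X_i(\lambda\Phi+2\pi\xi\cdot x)|_{x=y}$ and $G:=\big(\sum_{i=1}^{2d}|g_i|^2\big)^{1/2}$; we may further assume $G>0$.

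Apply \eqref{fullibp}, with $N\le 2d+2$ and the same index $j_0$, to the $C^N$ vector field $X:=\frac{r^2}{\max_j r_j}\sum_{i=1}^{2d}\frac{g_i}{G}X_i$ and the constant $K:=X\varphi(y)=\frac{r^2}{\max_j r_j}G$. Three estimates collapse the right-hand side of \eqref{fullibp}. First, the coefficients of each $X_i=\partial_i-\frac{\partial_i\rho}{|\nabla\rho|^2}\sum_j(\partial_j\rho)\partial_j$, together with all their derivatives of order $\le 2d+2$, are bounded on $B_1$ by admissible constants (using \eqref{finiterhonorms} and \eqref{implicit}), while each derivative falling on a factor $\psi_l$ costs $r_l^{-1}\le r^{-1}$ by Proposition \ref{wavepacket2}; distributing $m\le N$ derivatives over the factors and the amplitude $a$ therefore gives $\|X^m\psi\|_{L^\infty}\lesssim\big(\frac{r^2}{\max_j r_j}\big)^m r^{-m}\|\psi\|_{L^\infty}=\big(\frac{r}{\max_j r_j}\big)^m\|\psi\|_{L^\infty}\le\|\psi\|_{L^\infty}$, and since $X^*$ differs from $-X$ only by multiplication by a bounded smooth function, $\|(X^*)^k\psi\|_{L^\infty(M\cap I)}\lesssim\|\psi\|_{L^\infty}\lesssim\prod_j r_j^{-1/2}$ for $k\le N$. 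Second, $|\partial_j\varphi|=|\lambda\partial_j\Phi+2\pi\xi_j|\lesssim r^{-2}$ and $|\lambda\partial^\beta\Phi|\lesssim|\lambda|\le r^{-2}$ for $2\le|\beta|\le 2d+2$, and each further application of an $X_i$ preserves the bound $\lesssim r^{-2}$, so $\|X^\ell\varphi\|_{L^\infty}\lesssim\big(\frac{r^2}{\max_j r_j}\big)^\ell r^{-2}$ and hence, for $\ell\ge 2$, $\|X^\ell\varphi\|_{L^\infty}^{1/\ell}\lesssim\frac{r^{2-2/\ell}}{\max_j r_j}\le\frac{r}{\max_j r_j}\le 1$ (using $r\le|\lambda|^{-1/2}\le 1$ from \eqref{lambdaisntsmall}). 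Third, $\|X\varphi-K\|_{L^\infty(M\cap I)}\lesssim 1$: on $M\cap I$ one has $|x-y|\lesssim\max_j r_j$, while $|\nabla(X\varphi)|\lesssim\frac{r^2}{\max_j r_j}\cdot r^{-2}=\frac{1}{\max_j r_j}$ throughout $B_1$ (the only terms being those of the form $(\nabla b_{ij})\partial_j\varphi$ and $b_{ij}\lambda\nabla\partial_j\Phi$, $b_{ij}$ a coefficient of $X_i$), so the mean value theorem applies along the segment from $y$ to $x$, which stays in the convex box $B_1$. Feeding these into \eqref{fullibp} makes the bracketed factor there $\lesssim 1$, and with $\frac{|I|}{|I_{j_0}|}=\prod_{j\ne j_0}|I_j|\le\prod_{j\ne j_0}r_j$ we obtain $|\mathcal{I}(y,\xi)|\lesssim|K|^{-N}r_{j_0}^{-1}\prod_j r_j^{1/2}=\big(\frac{r^2}{\max_j r_j}G\big)^{-N}r_{j_0}^{-1}\prod_j r_j^{1/2}$, which together with the trivial bound yields \eqref{sizeest}.

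For the final assertion, if $\mathcal{I}(y,\xi)\ne 0$ then there is a point $x\in M\cap\operatorname{supp}a$ at which every factor $\varphi_{\xi_j}(x_j-y_j)$ is nonzero, so that $|x_j-y_j|\le r_j/2$ for each $j$ and thus $|x-y|\lesssim\max_j r_j$; moreover $y\in B_1$. Since $\rho(x)=0$ and $B_1$ is convex, $|\rho(y)|=|\rho(y)-\rho(x)|\le\big(\sup_{B_1}|\nabla\rho|\big)|x-y|\lesssim\max_j r_j$ by \eqref{finiterhonorms}.

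The delicate point is the choice of the scaling factor $\frac{r^2}{\max_j r_j}$ in front of the unit combination of the $X_i$. It is forced by the third estimate: the support region $M\cap I$ has diameter of order $\max_j r_j$, while $\nabla(X_i\varphi)$ has size of order $r^{-2}$, so precisely the factor $r^2/\max_j r_j$ renders $X\varphi$ essentially constant on $M\cap I$ (and $X^\ell\varphi$ harmless); yet this factor is $\le r\le 1$, so it causes no loss in the amplitude estimate. A smaller factor would forfeit a power of $r^2/\max_j r_j$ relative to the oscillation $G$ that each integration by parts produces, destroying the match with the trivial bound; a larger one would break the first estimate. One must also keep a careful count of how many derivatives of $\rho$, $\Phi$, and $a$ are consumed---at most $2d+3$, $2d+2$, and $2d+2$ respectively---which is exactly why $N$ is restricted to $N\le 2d+2$.
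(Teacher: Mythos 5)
Your proposal is correct and follows essentially the same route as the paper: the trivial bound from Proposition \ref{iftprop}, then \eqref{fullibp} with the vector field scaled by $\tilde r = r^2/\max_j r_j$, the amplitude estimates via Proposition \ref{wavepacket2} and the bounds on derivatives of $\rho$, and the Mean Value Theorem to show $X\varphi$ is nearly constant on the support. The only (harmless) difference is that you integrate by parts along the normalized combination $\sum_i (g_i/G)X_i$, whereas the paper uses the single $X_i$ maximizing $|K_i|$, which captures the Euclidean norm $G$ up to a factor of $\sqrt{2d}$.
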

\begin{proof}
First observe that \eqref{ift} implies the inequality
\begin{equation} \int_M \prod_{j=1}^{2d} | \varphi_{\xi_j}(x_j - y_j)| |a(x)| d \sigma(x) \lesssim ||a||_{L^\infty} r_{j_0}^{-\frac{1}{2}}  \prod_{j \neq j_0} r_j^{\frac{1}{2}} \label{support} \end{equation}
since $\varphi_{\xi_j}$ is supported on an interval of length $r_j$ and has $|| \varphi_{\xi_j}||_\infty \lesssim r_j^{-1/2}$.
For convenience, write $\mathcal I(y,\xi)$ as
\[ {\mathcal I}(y,\xi) = \int_{M} e^{i (\lambda \Phi(x) + 2 \pi \xi \cdot x)}  \left[ e^{- 2 \pi i \xi \cdot y} a(x) \prod_{j=1}^{2d}  e^{- 2 \pi i \xi_j (x_j - y_j)} \varphi_{\xi_j}(x_j - y_j) \right]   d \sigma(x) \]
and let $a_{y,\xi}(x)$ denote the function in brackets.
Let $\tilde r := r^2 / \max_j r_j$ and apply \eqref{fullibp} with $\psi := a_{y,\xi}$,  $\varphi := \lambda \Phi(x) + 2 \pi \xi \cdot x$, and  $X := \tilde r X_i$.
It follows that
\begin{equation}
\begin{split}
 |\mathcal I(y,\xi)| \lesssim \frac{ \prod_{j} r_j }{|K|^N r_{j_0}}  \sum_{k=0}^N ||(\tilde r X^*_i)^k&  a_{y,\xi}||_{\infty}  \left[ \vphantom{\sum_{\ell=2}^{N-k}} \right.  || \tilde r X_i ( \lambda \Phi + 2 \pi  \xi \cdot x) - K||_{\infty} ^{N-k} \\ & \left. + \sum_{\ell=2}^{N-k} ||(\tilde rX_i)^\ell ( \lambda \Phi + 2 \pi \xi \cdot x)||_{\infty}^{\frac{N-\ell}{k}} \right] 
\end{split} \label{applyibp}
\end{equation}
for any nonzero choice of $K$, where the $L^\infty$ norm is taken on $M \cap I_{y,\xi}$ for $I_{y,\xi} := y + [-r_1/2,r_1/2] \times \cdots \times [-r_{2d}/2,r_{2d}/2]$.  To estimate the right-hand side, begin by considering the action of $X_i^*$ on an arbitrary function $f$:
\begin{align*}
 X_i^* f  
 = - \partial_i f + \sum_{j=1}^n \frac{\partial_i \rho \partial_j \rho}{|\nabla \rho|^2} \partial_j f + f \sum_{j=1}^{2d} \partial_j \left( \frac{\partial_i \rho \partial_j \rho}{|\nabla \rho|^2}  \right).
 \end{align*}
From this formula and the quotient rule, one sees that $X_i^* f$ may be expressed as a linear combination of $f$ and its first coordinate derivatives with coefficients which are polynomial functions of $|\nabla \rho|^{-2}$ and the first and second derivatives of $\rho$. By induction on $k \leq N$, this means that $(X_i^*)^k f$ will be a linear combination of the derivatives $\partial^{\alpha} f$ for $|\alpha| \leq k$ with coefficients which are polynomials in $|\nabla \rho|^{-2}$ and the derivatives $\partial^{\beta} \rho$ for $1 \leq |\beta| \leq k + 1$ (the maximum order of differentiation of $\rho$ grows like $k+1$ because the coefficients themselves are differentiated at most one time for every subsequent application of an $X_i^*$). It follows from the inequalities \eqref{derivest} and \eqref{finiteanorms} as well as the bounds \eqref{finiterhonorms} and \eqref{implicit} on derivatives of $\rho$ that for each $k \in \{0,\ldots,N\}$,
\begin{equation} |(\tilde r X^*_i)^k a_{y,\xi}| \lesssim \prod_{j=1}^{2d} r_j^{-\frac{1}{2}}. \label{nlinderivest} \end{equation}
Here the factor of $\tilde r^k$ on the left-hand side compensates for the powers of $r_j^{-1}$ in \eqref{derivest} that grow with the number of derivatives because $\tilde r r_j^{-1} \leq r r_j^{-1} \leq 1$ for all $j$. Terms with fewer than $k$ derivatives falling on the modulated $\varphi_{\xi_j}$ are controlled by the same upper bound because $\tilde r$ is bounded above by virtue of \eqref{lambdaisntsmall}.
A similar argument gives
\[ \sum_{\ell=2}^{N-k} |(\tilde rX_i)^\ell ( \lambda \Phi + 2 \pi \xi \cdot x)|^{\frac{N-\ell}{k}} \lesssim 1 \]
because \eqref{lambdaisntsmall} and $\ell \geq 2$  imply that $\tilde r^{\ell} (| \lambda| + |\xi|) \lesssim 1$ and because the derivatives of $\Phi$ are bounded by \eqref{finitephinorms}.
Therefore \eqref{applyibp} implies
\[ | {\mathcal I} (y,\xi)| \lesssim \frac{1}{|K|^N} \left( r_{j_0}^{-\frac{1}{2}} \prod_{j \neq j_0} r_j^{\frac{1}{2}} \right) \! \! \left( 1 + \sum_{k=0}^N \sup_{x \in I_{y,\xi}} | \tilde r X_i ( \lambda \Phi + 2 \pi  \xi \cdot x) - K|^{N-k} \right)  \]
for any choice of $i$ and $j_0$.
For each triple $(y,\xi,\lambda)$ and each $i=1,\ldots,2d$, let $K_i := \left. \tilde r X_i (\lambda \Phi + 2 \pi \xi \cdot x) \right|_y$. Then let $K := K_i$ for any index $i$ maximizing $|K_i|$.  If $|K| \leq 1$, then \eqref{sizeest} follows immediately from \eqref{support}, so assume $|K| \geq 1$. For any $x$ in the support of $a_{y,\xi}$, 
\[ \sup_{x \in I_{y,\xi}} \left| \tilde r  \left.  X_i ( \lambda \Phi + 2 \pi  \xi \cdot x) \right|_x - \tilde r  \left.  X_i ( \lambda \Phi + 2 \pi  \xi \cdot x) \right|_y \right| \lesssim 1. \]
This follows from the Mean Value Theorem via the inequalities
\begin{align*}
 \left|  \left. \lambda \tilde r X_i \Phi \right|_x -  \left. \lambda \tilde r X_i \Phi \right|_y\right| \leq |\lambda| \tilde r \sum_{j=1}^{2d} |x_j - y_j| \sup_{z \in B_1} | \partial_j X_i \Phi(z)| \lesssim |\lambda| \tilde r \max_{j} r_j = |\lambda| r^2
 \end{align*}
 and
 \[ 2 \pi \tilde r \left| \left( \left. \xi_i -  \frac{\partial_i \rho}{|\nabla \rho|^2} \nabla \rho \cdot \xi  \right|_x \right) -  \left( \left. \xi_i -  \frac{\partial_i \rho}{|\nabla \rho|^2} \nabla \rho \cdot \xi  \right|_y \right) \right| \lesssim \tilde r |\xi| \max_j r_j = r^2 |\xi| \]
 because $|\lambda| r^2 \leq 1$ and $r^2 |\xi| \leq \sqrt{2d}$.
 Thus it must be the case that $ | {\mathcal I} (y,\xi)| \lesssim  |K|^{-N} r_{j_0}^{-1/2} \prod_{j \neq j_0} r_j^{1/2}$. Because $|K| \geq 1$ and $|K| \geq |K_i|$ for each $i$,
 \[ 1 + \tilde r \left( \sum_{j=1}^{2d} \left|\left. X_i(\lambda \Phi + 2 \pi \xi \cdot x) \right|_y \right|^2 \right)^{\frac{1}{2}} \leq (1 + \sqrt{2d}) |K|, \]
 so \eqref{sizeest} must continue to hold when $|K| \geq 1$ as well as a direct consequence of the stronger inequality $ | {\mathcal I} (y,\xi)| \lesssim  |K|^{-N} r_{j_0}^{-1/2} \prod_{j \neq j_0} r_j^{1/2}$.
 
To finish, note that $\mathcal I(y,\xi) = 0$ unless $|y_j - x_j| \leq r_j$ for each $j$ and some $x \in M$, which 
means that
$|\rho(y)| = |\rho(y) - \rho(x)| \leq (\max_j r_j) \sup_{z \in B_1} \sum_{i=1}^{2d} |\partial_i \rho(z)|$ by the Mean Value Theorem.
\end{proof}

It is helpful to explicitly compute the action of the vector fields $X_i$. Since
\begin{equation} X_i ( \lambda \Phi + 2 \pi \xi \cdot x)|_{y} = \lambda \partial_i \Phi(y) + 2 \pi \xi_i - \frac{\partial_i \rho(y)}{|\nabla \rho(y)|^2} ( \lambda \nabla \Phi (y) + 2 \pi \xi) \cdot \nabla \rho(y) \label{vcalc} \end{equation}
for each $i$, 
the vector $(X_1 ( \lambda \Phi + 2 \pi \xi \cdot x)|_{y},\ldots,X_{2d} ( \lambda \Phi + 2 \pi \xi \cdot x)|_{y})$ is merely the orthogonal projection of $\lambda \nabla \Phi(y) + 2 \pi \xi$ onto the space orthogonal to $\nabla \rho(y)$.

It is also convenient to break the analysis  of ${\mathcal I}(y,\xi)$ into two regions depending on $\xi$. Letting $c$ be some small positive constant, the first region will be those $\xi$ for which $\min_{j=1,\ldots,2d} r_j \leq c \max_{j=1,\ldots,2d} r_j$.  The second region will be the remaining part: $\min_{j =1,\ldots,2d} r_j \geq c \max_{j=1,\ldots,2d}$. The analysis of $\mathcal I(y,\xi)$ is much more subtle on the second region than the first, but the second region has the small advantage that all the lengths $r_1,\ldots,r_{2d}$ are always comparable on this region, meaning that $r_j \approx r \approx \min \{ |\lambda|^{-1/2}, |\xi|^{-1/2} \}$. The proposition below gives a minor refinement of this splitting and establishes good boundedness properties of ${\mathcal I}(y,\xi)$ on the first region and a related set:
\begin{proposition}
Let $\Xi_1$ be the set consisting of all $\xi \in \R^{2d}$ such that $$\min_{j=1,\ldots,2d} r_j \leq c \max_{j=1,\ldots,2d} r_j.$$ For all sufficiently small $c > 0$ depending only on admissible constants and any fixed value of $N \leq 2d+2$, every pair $(y,\xi) \in B_1 \times \Xi_1$ satisfies the inequality \label{goodpart}
\begin{equation} \left| {\mathcal I}(y,\xi) \right| \lesssim ( |\lambda| + |\xi| )^{-(N-1)/2} |\lambda|^{-\frac{d}{2}}.\label{rapiddecay1} \end{equation}
Moreover, for all sufficiently small $c' > 0$ and all sufficiently large $c''$ depending only on admissible constants, if $(y,\xi) \in B_1 \times (\R^{2d} \setminus \Xi_1)$ satisfies $|\xi| \geq c'' |\lambda|$ and 
\begin{equation} \left| \frac{( \lambda \nabla \Phi (y) + 2 \pi \xi) \cdot \nabla \rho(y)}{|\nabla \rho(y)|^2} \right| \leq c' r^{-2}, \label{smalltao} \end{equation}
then
\begin{equation} 
|\mathcal I(y,\xi)| \lesssim (|\lambda| + |\xi|)^{-\frac{N}{2}} |\lambda|^{-\frac{d-1}{2}}. \label{rapiddecay2} 
\end{equation}
If $E \subset \R^{2d} \times \R^{2d}$ consists of all $(y,\xi)$ to which either \eqref{rapiddecay1} or \eqref{rapiddecay2} applies, then
\begin{equation}
\begin{split}
\left| \int_{\R^{2d} \times \R^{2d}} \! \! {\mathcal I}(y,\xi) \chi_{E}(y,\xi) \prod_{j=1}^{2d} f_j(y_j,\xi_j) dy d \xi \right|  \lesssim |\lambda|^{-\frac{d-1}{2}} \prod_{j=1}^{2d} ||f_j||_{L^2}
\end{split} \label{schwartzpart}
\end{equation}
for all functions $f_j \in L^2(\R \times \R)$.
\end{proposition}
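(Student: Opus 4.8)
The plan is to extract quantitative decay from the size estimate \eqref{sizeest} in each of the two regimes, and then assemble the resulting pointwise bounds into the bilinear-type estimate \eqref{schwartzpart} by a crude Schur-test / Cauchy--Schwarz argument that exploits the fact that $\mathcal I(y,\xi)$ is supported, in $y$, near the zero set of $\rho$. First I would handle \eqref{rapiddecay1}: on $\Xi_1$, by definition some $r_{j_1}$ is much smaller than some $r_{j_2}$, hence (using \eqref{boxsize}-type comparisons) $|\xi_{j_1}|$ is much larger than $\max\{|\lambda|,|\xi_{j_2}|\}$, so the projected frequency vector $\lambda\nabla\Phi(y)+2\pi\xi$ cannot be killed by the projection onto $(\nabla\rho)^\perp$ — quantitatively, the $j_1$-component of that projection is $\gtrsim |\xi_{j_1}| \gtrsim r^{-2}\approx r_{j_1}^{-2}$, once $c$ is small enough that $\partial_{j_1}\rho$-related cross terms (bounded by admissible constants times smaller frequencies) cannot compensate. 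Feeding this lower bound on $\big(\sum_i |X_i(\lambda\Phi+2\pi\xi\cdot x)|_y|^2\big)^{1/2}$ into \eqref{sizeest} gives a gain of $(\tilde r \cdot r^{-2})^{-N} \approx (\max_j r_j / r)^{-N}$ over the trivial bound; combined with the prefactor $r_{j_0}^{-1}\prod_j r_j^{1/2}$ (choosing $j_0$ to be the index with the largest $r_j$, so $r_{j_0}^{-1}\prod r_j^{1/2}=\prod_{j\neq j_0} r_j^{1/2}$ and noting $\prod_j r_j^{1/2}\lesssim |\lambda|^{-d/2}$ plus one factor of $r$) one lands on \eqref{rapiddecay1} after bookkeeping the powers of $(\max\{|\lambda|,|\xi|\})^{-1/2}$.

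For \eqref{rapiddecay2} the hypotheses say we are \emph{off} $\Xi_1$ (all $r_j\approx r\approx |\xi|^{-1/2}$ since $|\xi|\geq c''|\lambda|$), and the scalar quantity $(\lambda\nabla\Phi+2\pi\xi)\cdot\nabla\rho/|\nabla\rho|^2$ is small by \eqref{smalltao}; by \eqref{vcalc} this means each $X_i(\lambda\Phi+2\pi\xi\cdot x)|_y = \lambda\partial_i\Phi(y)+2\pi\xi_i - (\text{that small scalar})\cdot\partial_i\rho(y)$, and since $|\xi|\geq c''|\lambda|$ with $c''$ large, the term $2\pi\xi_i$ dominates $\lambda\partial_i\Phi(y)$ (bounded via \eqref{finitephinorms}) for at least one $i$, namely the $i$ with $|\xi_i|\gtrsim |\xi|$; hence $\big(\sum_i|X_i(\cdots)|_y|^2\big)^{1/2}\gtrsim |\xi|\approx r^{-2}$ once the $c' r^{-2}\cdot|\nabla\rho(y)|$ correction is absorbed by taking $c'$ small. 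Again plugging into \eqref{sizeest} with the optimal choice of $j_0$ yields the $(\max\{|\lambda|,|\xi|\})^{-N/2}$ gain; keeping careful track of one factor of $r$ that is spent turning $\prod_j r_j^{1/2}\approx r^{-1}\cdot(\text{something})$ into $|\lambda|^{-(d-1)/2}$ produces exactly \eqref{rapiddecay2} (note the exponent of $|\lambda|$ is $-(d-1)/2$ here versus $-d/2$ in \eqref{rapiddecay1}, reflecting that on this second set one more power of $r\geq|\lambda|^{-1/2}$ is available).

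Finally, to obtain \eqref{schwartzpart} I would not attempt any $TT^*$ or orthogonality argument — following the paper's stated philosophy — but instead note that both \eqref{rapiddecay1} and \eqref{rapiddecay2} give, for $N$ chosen large enough (any $N\leq 2d+2$ with $N\geq$ some absolute constant suffices), a pointwise bound of the form $|\mathcal I(y,\xi)\chi_E(y,\xi)|\lesssim (1+|\xi|)^{-d-1}\,|\lambda|^{-(d-1)/2}\,\mathbf 1_{\{|\rho(y)|\lesssim\max_j r_j\}}$, using the last sentence of Proposition \ref{firstsize} for the $y$-support; one then bounds the multilinear integral by $L^\infty$ in the kernel times the $L^1$ mass of $\prod_j |f_j(y_j,\xi_j)|$, applying Cauchy--Schwarz in each pair $(y_j,\xi_j)$ separately after using that $\int (1+|\xi_j|)^{-(d+1)}\,d\xi_j<\infty$ and that the $y$-integral over the thin slab $\{|\rho(y)|\lesssim r\}\cap B_1$ of a fixed $L^2(\R^2)$ function's restriction is controlled (here one uses \eqref{implicit} to foliate $B_1$ by level sets of $\rho$ and Fubini). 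The one genuinely delicate point — and the step I expect to be the main obstacle — is organizing the summation/integration over $\xi\in\R^{2d}$ so that the decay in $|\xi|$ coming from \eqref{rapiddecay1}--\eqref{rapiddecay2} really is integrable against $2d$ copies of $\|f_j\|_{L^2}$ without losing any power of $|\lambda|$: the subtlety is that the $\xi_j$ live on different one-dimensional factors, the $r_j$ are genuinely different on $\Xi_1$, and the decay rate $(|\lambda|+|\xi|)^{-(N-1)/2}$ is in the \emph{total} variable $|\xi|$, so one must distribute it as $\prod_j(1+|\xi_j|)^{-\epsilon_j}$ with $\sum\epsilon_j$ large while keeping each $\epsilon_j$ positive; choosing $N$ comfortably larger than $2d$ makes this possible, but the allocation must be done explicitly and the constraint $N\leq 2d+2$ from Proposition \ref{firstsize} must be checked to leave enough room.
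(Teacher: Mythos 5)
Your treatment of \eqref{rapiddecay2} and of the final estimate \eqref{schwartzpart} is essentially sound (indeed, the last step is easier than you fear: since $dy\,d\xi = \prod_j dy_j\,d\xi_j$ and the product $\prod_j |f_j(y_j,\xi_j)|$ factorizes, its $L^2(\R^{2d}\times\R^{2d})$ norm is exactly $\prod_j \|f_j\|_{L^2(\R\times\R)}$, so a single joint Cauchy--Schwarz against the kernel bound $(|\lambda|+|\xi|)^{-(2d+1)/2}|\lambda|^{-(d-1)/2}$ with $N=2d+2$ finishes the proof; no distribution of decay among the $\xi_j$ and no slab foliation in $y$ are needed, only $|B_1|<\infty$).

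The genuine gap is in your derivation of \eqref{rapiddecay1}. You propose to feed a lower bound $\bigl(\sum_i |X_i(\lambda\Phi+2\pi\xi\cdot x)|_y|^2\bigr)^{1/2}\gtrsim r^{-2}$ directly into \eqref{sizeest}. But the normalizing factor in \eqref{sizeest} is $\tilde r = r^2/\max_j r_j$, not $r$, so this yields a decay factor of only $\bigl(1+\tilde r\, r^{-2}\bigr)^{-N} = \bigl(1+(\max_j r_j)^{-1}\bigr)^{-N}\approx (\max_j r_j)^{N}$. On $\Xi_1$ one can have $\max_j r_j = |\lambda|^{-1/2}$ while $r=|\xi|^{-1/2}$ with $|\xi|$ arbitrarily large, so this gives decay only in $|\lambda|$ and \emph{no decay in $|\xi|$ whatsoever} --- far weaker than the claimed $(|\lambda|+|\xi|)^{-(N-1)/2}$, and fatal for the subsequent $\xi$-integration over the unbounded set $\Xi_1$ in \eqref{schwartzpart}. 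The missing idea is that on $\Xi_1$ one cannot reuse Proposition \ref{firstsize} as a black box: one must redo the integration by parts at the \emph{smallest} scale $r$, using the single normalized tangential vector field $X_{j_1 j_2}$ built from the two extreme coordinates ($j_1$ minimizing and $j_2$ maximizing $r_j$), for which $|X_{j_1j_2}(\lambda\Phi+2\pi\xi\cdot x)|\gtrsim r^{-2}$ by comparing $|2\pi\xi_{j_1}|\geq \pi r_{j_1}^{-2}$ against $|\lambda\partial_{j_2}\Phi+2\pi\xi_{j_2}|\lesssim r_{j_2}^{-2}\leq c^2 r_{j_1}^{-2}$. Since the phase derivative along $rX_{j_1j_2}$ is only approximately constant on cubes of side $r$ (its variation over the full support box $I_{y,\xi}$, whose sides $r_j$ are not comparable on $\Xi_1$, is unbounded), one must introduce a partition of unity of $I_{y,\xi}$ into $\lesssim r^{-2d}\prod_j r_j$ cubes of side $r$, apply \eqref{fullibp} on each, and sum; this is what produces $|\mathcal I(y,\xi)|\lesssim r^{N-1}\prod_j r_j^{1/2}$ and hence \eqref{rapiddecay1}. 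Relatedly, your justification that ``cross terms cannot compensate'' the $j_1$-component of the projection is too loose as stated (cancellation among several large components of $\xi$ must be ruled out); the two-index comparison above is the clean way to obtain the required lower bound.
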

\begin{proof}
We first show \eqref{rapiddecay2}. In this case $r_j \approx r$, which along with \eqref{vcalc} gives
\[ \tilde r \left(\sum_{i} |X_i (\lambda \Phi + 2 \pi i \xi \cdot x)|_y|^2\right)^{1/2} \approx r \left| \lambda \nabla \Phi + 2 \pi \xi - \nabla \rho \frac{(\lambda \nabla \Phi + 2\pi \xi) \cdot \nabla \rho}{|\nabla \rho|^2} \right|. \]
The assumptions $|\xi| \geq c'' |\lambda|$ and \eqref{smalltao} along with the triangle inequality imply
\[ r \left| \lambda \nabla \Phi + 2 \pi \xi - \nabla \rho \frac{(\lambda \nabla \Phi + 2\pi \xi) \cdot \nabla \rho}{|\nabla \rho|^2} \right| \gtrsim r \left[ 2 \pi |\xi| - \frac{|\xi|}{c''}|\nabla \Phi| - |\nabla \rho| c' r^{-2} \right]. \]
Since $r^{-2} \approx |\xi|$ when all $r_j$ are comparable and $|\xi| \geq c'' |\lambda|$, if $c''$ is sufficiently large and $c'$ is sufficiently small depending on \eqref{finitephinorms} and \eqref{finiterhonorms}, respectively, then
\[ 1 + \frac{r^2}{\max_j r_j} \left(\sum_{i} |X_i (\lambda \Phi + 2 \pi \xi \cdot x)|_y|^2 \right)^{1/2} \gtrsim |\xi|^{1/2} \gtrsim (|\lambda| + |\xi|)^{1/2},\] and so \eqref{rapiddecay2} follows from \eqref{sizeest}, where we also use $r^{d-1} \leq |\lambda|^{-(d-1)/2}$.

Next, consider \eqref{rapiddecay1}. For any $\xi \in \Xi_1$, let $j_1$ be an index at which the minimum of $r_j$ is attained (i.e., $r_{j_1} = r$) and let $j_2$ be an index maximizing $r_j$. Each $r_j$ is at most $\lambda^{-1/2}$, so choosing $c < 1$, the strict inequality $r_{j_1} < r_{j_2}$ implies that  $r_{j_1} \neq |\lambda|^{-1/2}$, which forces the identity $r_{j_1} = |\xi_{j_1}|^{-1/2}$ to hold. Consequently
$|\xi_{j_1}|^{-1/2} \leq c |\lambda|^{-1/2}$, i.e., $|\lambda| \leq c^2 |\xi_{j_1}|$. By the triangle inequality,
\[ | \lambda \partial_{j_1} \Phi(y) + 2 \pi \xi_{j_1}| \geq |2 \pi \xi_{j_1} | - |\lambda| C_\Phi,  \]
where $C_\Phi$ is the constant in \eqref{finitephinorms}. Therefore, any positive choice of $c$ satisfying $c < \min\{ (\pi / (C_\Phi))^{1/2}, 1\}$ will give the inequality
\begin{equation} | \lambda \partial_{j_1} \Phi(y) + 2 \pi \xi_{j_1}| \geq | 2 \pi \xi_{j_1}| - |\lambda| C_\Phi \geq |\pi \xi_{j_1}|  = \pi r_{j_1}^{-2}. \label{largederiv} \end{equation}
  On the other hand, the triangle inequality also guarantees that
\begin{equation} | \lambda \partial_{j_2} \Phi(y) + 2 \pi \xi_{j_2}| \leq C_\Phi |\lambda| + 2 \pi |\xi_{j_2}| \leq \left(C_\Phi + 2 \pi \right) r_{j_2}^{-2}. \label{smallderiv} \end{equation}
Combining the inequalities \eqref{largederiv} and \eqref{smallderiv} with the uniform control on the derivatives $\partial_i \rho$ provided by \eqref{implicit} gives that the Euclidean-normalized vector field
\[ X_{j_1 j_2} := \frac{\partial_{j_2} \rho(x) }{\sqrt{  (\partial_{j_1} \rho(x))^{2} + (\partial_{j_2} \rho(x))^{2}}} \partial_{j_1}  - \frac{\partial_{j_1} \rho(x) }{\sqrt{  (\partial_{j_1} \rho(x))^{2} + (\partial_{j_2} \rho(x))^{2}}} \partial_{j_2} \]
applied to $\lambda \Phi + 2 \pi \xi \cdot x$ will satisfy
\begin{align*}
 \left|  X_{j_1 j_2} ( \lambda \Phi + 2 \pi \xi \cdot x)   \right| & \geq \left| \frac{\partial_{j_2} \rho(x) }{\sqrt{  (\partial_{j_1} \rho(x))^{2} + (\partial_{j_2} \rho(x))^{2}}} \right| \pi r_{j_1}^{-2} \\ & \qquad  - \left| \frac{\partial_{j_1} \rho(x) }{\sqrt{  (\partial_{j_1} \rho(x))^{2} + (\partial_{j_2} \rho(x))^{2}}}  \right| \left(C_\Phi + 2 \pi \right) r_{j_2}^{-2}. \end{align*}
 Since \eqref{finiterhonorms} and \eqref{implicit} provide uniform bounds above and below on $|\partial_{j_1} \rho(x) / \partial_{j_2} \rho(x)|$, the inequality $r_{j_2}^{-2} \geq c^{2} r_{j_1}^{-2}$ guarantees that 
\begin{equation}  \left|  X_{j_1 j_2}  (\lambda \Phi + 2 \pi \xi \cdot x  ) \right| \gtrsim r_{j_1}^{-2} \label{nstat} \end{equation}
for all $x \in B_1$ provided that $c$ is sufficiently small depending on admissible constants. Just as was encountered in Proposition \ref{firstsize}, one has
\[ \sum_{\ell=2}^N \left| (r X_{j_1 j_2})^{\ell} (\lambda \Phi + 2 \pi \xi \cdot x) \right|^{\frac{N-\ell}{k}} \lesssim 1 \]
because $r^2 (|\lambda| + |\xi|) \lesssim 1$.  Supposing that $\eta$ is any smooth function on $B_1$ supported on a product of intervals $\tilde I := \tilde I_1 \times \cdots \times \tilde I_{2d}$, each of length at most $r$, and satisfying 
\begin{equation} |\partial^{\alpha} \eta(x)| \leq C_{\alpha} r^{-|\alpha|}, \label{partunity} \end{equation}
for $|\alpha| \leq N$, then just as in \eqref{nlinderivest},
\[ \left| ( r X_{j_1 j_2}^*)^k ( a_{y,\xi} \eta) \right| \lesssim \prod_{j=1}^{2d} r_j^{-\frac{1}{2}} \]
for $k \in \{0,\ldots,N\}$. By \eqref{fullibp}, then
\begin{align*} &   \left| \int e^{i (\lambda \Phi(x) + 2 \pi \xi \cdot x)} a_{y,\xi}(x) \eta(x) d \sigma(x) \right| \\ & \qquad \lesssim \frac{r^{2d-1}}{|K|^N} \left( \prod_{j=1}^{2d} r_j^{-\frac{1}{2}} \right) \left[ 1 + \sum_{k=0}^N || r X_{j_1 j_2} (\lambda \Phi + 2 \pi \xi \cdot x) - K ||_{L^\infty(M \cap \tilde I)}^{N-k} \right]
\end{align*}
for any choice of $K$. Let $K$ equal $r X_{j_1 j_2} (\lambda \Phi + 2 \pi \xi \cdot x)$ evaluated at any point in $\tilde I$. The inequality \eqref{nstat} implies $|K| \gtrsim r^{-1}$ and the Mean Value Theorem implies
\[ || r X_{j_1 j_2} (\lambda \Phi + 2 \pi \xi \cdot x) - K ||_{L^\infty(M \cap \tilde I)} \lesssim r^2 (|\lambda| + |\xi|) \lesssim 1. \]
Therefore
\[  \left| \int e^{i (\lambda \Phi(x) + 2 \pi \xi \cdot x)} a_{y,\xi}(x) \eta(x) d \sigma(x) \right| \leq r^{N + 2d - 1} \prod_{j=1}^{2d} r_j^{-\frac{1}{2}}. \]
It is always possible to construct a partition of unity on $I_{y,\xi}$ adapted to boxes of side length $r$ with smooth cutoff functions $\eta$ satisfying \eqref{partunity} uniformly. This partition of unity can be taken to have a number of elements which is at most some fixed constant times $r^{-2d} \prod_{j} r_j$. Summing over the partition gives 
$\left| \mathcal I(y,\xi) \right| \lesssim r^{N-1} \prod_{j} r_j^{1/2}$
for all $(y,\xi) \in B_1 \times \Xi_1$, which gives \eqref{rapiddecay1} because $r_j \leq |\lambda|^{-1/2}$ for each $j$.

Finally, to prove \eqref{schwartzpart}, fix $N := 2d + 2$ (this is why we need \eqref{finiterhonorms} to hold for all $|\alpha| \leq 2d + 3$ and \eqref{finitephinorms} and \eqref{finiteanorms} to hold for all $|\alpha| \leq 2d + 2$). For all $(y,\xi) \in E$, \eqref{rapiddecay1} and \eqref{rapiddecay2} imply that $|{\mathcal I}(y,\xi)| \lesssim (|\lambda| + |\xi|)^{-(2d+1)/2} |\lambda|^{-(d-1)/2}$ because $|\lambda|$ is necessarily bounded below.
By Cauchy-Schwarz,
\begin{align*}
\int_{B_1 \times \R^{2d}} &  \chi_{E}(y,\xi) |\mathcal I(y,\xi)| \prod_{j=1}^{2d} |f_j(y_j,\xi_j)| dy d \xi \\
& \lesssim |\lambda|^{-\frac{d-1}{2}} \left[ \int_{B_1 \times \R^{2d}} (|\lambda| + |\xi|)^{-(2d+1)} dy d \xi \right]^{\frac{1}{2}} \prod_{j=1}^{2d} ||f_j||_{L^2(\R \times \R)} \\
& \lesssim |B_1|^{\frac{1}{2}} |\lambda|^{-\frac{d-1}{2}} \left[ \int_{\R^{2d}} (|\lambda| + |\xi|)^{-(2d+1)} d \xi \right]^{\frac{1}{2}}\prod_{j=1}^{2d} ||f_j||_{L^2(\R \times \R)}.
\end{align*}
Making the change of variables $\xi \mapsto |\lambda| \xi$ gives
\begin{align*}
\int_{B_1 \times \R^{2d}} &  \chi_{E}(y,\xi) |\mathcal I(y,\xi)| \prod_{j=1}^{2d} |f_j(y_j,\xi_j)| dy d \xi \\
& \lesssim |\lambda|^{-\frac{d-1}{2}} \left[ |\lambda|^{-1} \int_{\R^{2d}} (1 + |\xi|)^{-(2d+1)} d \xi \right]^{\frac{1}{2}}\prod_{j=1}^{2d} ||f_j||_{L^2(\R \times \R)}
\end{align*}
which gives \eqref{schwartzpart} by virtue of the triangle inequality.
\end{proof}

\begin{proposition}
Let $\Xi_2$ be the complement of $\Xi_1 \subset \R^{2d}$. For each $(y,\xi)$, let $\tau_0(\lambda,y,\xi) := - \nabla \rho(y) \cdot (\lambda \Phi(y) + 2 \pi \xi) / |\nabla \rho(y)|^2$. For any positive integer $N \leq 2d+3$, any sufficiently small $c' > 0$ depending only on admissible constants, and any $(y,\xi) \in B_1 \times \Xi_2$, \label{integprop}
\begin{equation} |\mathcal I (y ,\xi) | \lesssim \chi_{|\rho(y)| \leq c r} r^d \int_{|\tau - \tau_0 | \leq c' r^{-1}} \frac{d \tau}{\left( 1 + r |\lambda \nabla \Phi(y) + 2 \pi \xi  + \tau \nabla \rho(y)) | \right)^{N}}  \label{sizeest2} \end{equation}
for some positive $c$ depending only on admissible constants. 
\end{proposition}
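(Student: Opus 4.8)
The plan is to obtain \eqref{sizeest2} directly from the pointwise bound \eqref{sizeest} of Proposition \ref{firstsize}, using the geometric description of the vector fields $X_i$ recorded just after \eqref{vcalc}. First I would restrict attention to $(y,\xi) \in B_1 \times \Xi_2$: on this region all of $r_1,\ldots,r_{2d}$ are comparable to $r$, so the prefactors in \eqref{sizeest} collapse to $r^2/\max_j r_j \approx r$ and $r_{j_0}^{-1}\prod_j r_j^{1/2} \approx r^{d-1}$. Next, by the remark following \eqref{vcalc}, the vector $(X_1(\lambda\Phi + 2\pi\xi\cdot x)|_y,\ldots,X_{2d}(\lambda\Phi + 2\pi\xi\cdot x)|_y)$ is the orthogonal projection of $\lambda\nabla\Phi(y) + 2\pi\xi$ onto the orthogonal complement of $\nabla\rho(y)$. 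Since $\tau_0 = \tau_0(\lambda,y,\xi)$ is precisely the scalar for which $v := \lambda\nabla\Phi(y) + 2\pi\xi + \tau_0\nabla\rho(y)$ is orthogonal to $\nabla\rho(y)$, this projection equals $v$, and hence $\big(\sum_i |X_i(\lambda\Phi + 2\pi\xi\cdot x)|_y|^2\big)^{1/2} = |v|$. Feeding this into \eqref{sizeest} gives
\[ |\mathcal I(y,\xi)| \;\lesssim\; \big(1 + r\,|v|\big)^{-N}\, r^{d-1}, \qquad (y,\xi) \in B_1 \times \Xi_2, \]
while the last sentence of Proposition \ref{firstsize} gives $\mathcal I(y,\xi) = 0$ unless $|\rho(y)| \lesssim \max_j r_j \approx r$, which supplies the factor $\chi_{|\rho(y)| \leq c r}$ for an appropriate admissible $c$.

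It then remains to check that the right-hand side of \eqref{sizeest2} dominates $r^{d-1}(1 + r|v|)^{-N}$. For $\tau$ with $|\tau - \tau_0| \leq c' r^{-1}$, set $w(\tau) := \lambda\nabla\Phi(y) + 2\pi\xi + \tau\nabla\rho(y) = v + (\tau - \tau_0)\nabla\rho(y)$. Because $v \perp \nabla\rho(y)$, the Pythagorean identity gives $|w(\tau)|^2 = |v|^2 + (\tau-\tau_0)^2|\nabla\rho(y)|^2$, and therefore
\[ |v| \;\leq\; |w(\tau)| \;\leq\; |v| + |\tau - \tau_0|\,|\nabla\rho(y)| \;\leq\; |v| + c'\,C_\rho\, r^{-1} \]
by \eqref{finiterhonorms}. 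Consequently $1 + r|w(\tau)| \leq (1 + c' C_\rho)(1 + r|v|)$ uniformly over the interval $|\tau - \tau_0| \leq c' r^{-1}$, whose length is $2c' r^{-1}$; integrating the resulting lower bound on the integrand yields
\[ \int_{|\tau - \tau_0| \leq c' r^{-1}} \frac{d\tau}{\big(1 + r|w(\tau)|\big)^{N}} \;\gtrsim\; \frac{r^{-1}}{\big(1 + r|v|\big)^{N}}. \]
Multiplying through by $r^d$ recovers exactly $r^{d-1}(1 + r|v|)^{-N}$, and combining with the displayed estimate for $|\mathcal I(y,\xi)|$ (and inserting the cutoff $\chi_{|\rho(y)| \leq c r}$) gives \eqref{sizeest2}.

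There is no serious obstacle here: the argument is essentially bookkeeping once one recognizes that the quantity governing decay in \eqref{sizeest} is $r$ times the length of the component of $\lambda\nabla\Phi(y) + 2\pi\xi$ orthogonal to $\nabla\rho(y)$ — that is, $r|w(\tau_0)|$ — and that replacing $\tau_0$ by a nearby $\tau$ only enlarges $|w(\tau)|$ in a controlled way. The mild points requiring attention are the matching of admissible constants: the $c$ in $\chi_{|\rho(y)| \leq c r}$ is the implied constant from the final assertion of Proposition \ref{firstsize}, while any fixed $c' > 0$ is permissible in the lower bound above (with implied constant depending on $c'$), the ``sufficiently small $c'$'' in the statement being dictated only by how \eqref{sizeest2} is later applied. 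Finally, $N$ enters solely through \eqref{sizeest}, so the estimate is valid for the range of $N$ for which Proposition \ref{firstsize} is available, and the passage to the $\tau$-integral form — trading the sharp pointwise bound for the slightly weaker averaged version in \eqref{sizeest2} — is what will make the subsequent $L^2$ arguments amenable to a change of variables in $\tau$.
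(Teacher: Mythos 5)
Your proposal is correct and follows essentially the same route as the paper: on $\Xi_2$ all $r_j\approx r$, the vector of $X_i$-values equals $\lambda\nabla\Phi(y)+2\pi\xi+\tau_0\nabla\rho(y)$ by \eqref{vcalc} and the definition of $\tau_0$, the integrand in \eqref{sizeest2} is comparable to the pointwise bound from \eqref{sizeest} throughout the interval $|\tau-\tau_0|\leq c'r^{-1}$ (accounting for the extra factor of $r$), and the cutoff $\chi_{|\rho(y)|\leq cr}$ comes from the final assertion of Proposition \ref{firstsize}. Your Pythagorean/triangle-inequality verification of the comparability in $\tau$ is just a slightly more explicit version of the paper's appeal to the triangle inequality.
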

\begin{proof}
For all $\xi \in \Xi_2$, all $r_j$ are comparable and so $r \approx \min\{ \lambda^{-1/2}, |\xi|^{-1/2}\}$.
By definition of $\tau_0$ and \eqref{vcalc},
\[ 1 + r \left( \sum_{i=1}^{2d}  |X_i ( \lambda \Phi + 2 \pi \xi \cdot x)|_{y}|^2 \right)^{\frac{1}{2}} =   1 + r | \lambda \Phi(y) + 2 \pi \xi + \tau_0 \nabla \rho(y)|. \]
Now for any $\tau$ in the interval $|\tau - \tau_0| \leq c'r^{-1}$, the triangle inequality gives that
\[ 1 + r | \lambda \Phi(y) + 2 \pi \xi + \tau_0 \nabla \rho(y)| \approx 1 + r | \lambda \Phi(y) + 2 \pi \xi + \tau \nabla \rho(y)|.\]
The inequality \eqref{sizeest2} follows from \eqref{sizeest} because $r_{j_0}^{-1} \prod_{j=1}^{2d} r_j^{1/2} \approx r^{d-1}$ and because the integrand on the right-hand side of \eqref{sizeest2} is greater than or comparable to 
\begin{equation} \left( 1 + r \left( \sum_{i=1}^{2d}  |X_i ( \lambda \Phi + 2 \pi \xi \cdot x)|_{y}|^2 \right)^{\frac{1}{2}} \right)^{-N} \label{integrandagain} \end{equation}
for every $\tau$ in the interval $|\tau - \tau_0| \leq c' r^{-1}$. (In particular, \eqref{sizeest2} has a factor of $r^d$ instead of $r^{d-1}$ precisely because \eqref{integrandagain} is bounded above by $r$ times the integral over $\tau$ appearing in \eqref{sizeest2}.)
The fact that $I(y,\xi) = 0$ outside the region $|\rho(y)| \leq c r$ was observed in Proposition \ref{firstsize}. 
\end{proof}

\subsection{Main contribution}
\label{mainsec}

By \eqref{schwartzpart}, the proof of \eqref{mainresult} is reduced to establishing an inequality of the form
\begin{equation} \int_{E}  \frac{r^d \chi_{|\rho(y)| \lesssim r} \prod_{j=1}^{2d} |f_j(y_j,\xi_j)|d y d \tau d \xi }{ ( 1 + r | \lambda \nabla \Phi(y) + 2 \pi \xi + \tau \nabla \rho(y)| )^{2d+1}}   \lesssim |\lambda|^{-\frac{d-1}{2}} \prod_{j=1}^{2d} ||f_j||_{L^2(\R \times \R)} \label{smallregion} \end{equation}
with $E := \set{(y,\tau,\xi)}{ |\tau - \tau_0| \leq c' r^{-1} \mbox{ and either } |\xi| \lesssim |\lambda| \mbox{ or } |\tau_0| \gtrsim r^{-2}}$. Here the integrand in \eqref{smallregion} is comparable to the upper bound of $|\mathcal I(y,\xi)|$ provided by \eqref{sizeest} because for any pair $(y,\xi)$ such that neither \eqref{rapiddecay1} nor \eqref{rapiddecay2} hold, necessarily $\max_{j} r_j \lesssim \min_j r_j$. There are also two key inequalities which hold uniformly for all such pairs $(y,\xi)$: the first is that $r_j^{-2} \approx |\lambda| + |\xi|$ for each $j$ since $r^{-2} = \max \{ |\lambda|, \max_j |\xi_j| \}  \approx |\lambda| + |\xi|$, and the second is that any $\tau$ in the interval $|\tau - \tau_0| \leq c' r^{-1}$ will satisfy $|\lambda| + |\tau| \approx |\lambda| + |\xi|$. To prove this latter inequality, first fix the sufficiently small constant $c'$ from Proposition \ref{integprop} to equal the constant $c'$ in \eqref{smalltao}. With this identification of constants, 
if \eqref{smalltao} fails, $|\tau| \geq |\tau_0| - |\tau-\tau_0| \geq c' r^{-2} - c' r^{-1} \gtrsim r^{-2}$, which means that $|\lambda| + |\tau| \gtrsim r^{-2} \approx |\lambda| + |\xi|$.  One must then have $|\lambda| + |\tau| \approx |\lambda| + |\xi|$ because $|\tau_0| \lesssim |\lambda| + |\xi|$.  If \eqref{smalltao} does not fail, then $|\xi| \leq c'' |\lambda|$  and thus $|\tau| \lesssim |\lambda|$, which implies $|\tau| + |\lambda| \approx |\lambda| \approx r^{-2}$. 
It suffices, then, to prove \eqref{smallregion} with the set $E$ replaced by those triples $(y,\tau,\xi) \in B_1 \times \R \times \R^{2d}$ satisfying
\begin{equation}
r_1^{-2}  \approx  \cdots \approx r_{2d}^{-2} \approx |\lambda| + |\xi|  \approx |\lambda| + |\tau|.
\end{equation}

The proof of this modified version of \eqref{smallregion} is an elementary interpolation argument. The idea is to partition the indices $\{1,\ldots,2d\}$ into two sets $\{i_1,\ldots,i_{d}\}$ and $\{j_1,\ldots,j_d\}$ which give a nonzero determinant {\it a la} \eqref{mainhyp}. The partitioning must be done locally in $y$ to account for the fact that no one choice of partition is guaranteed to work at all points. With this partition and localization in place, one assumes that $f_{i_1},\ldots,f_{i_d}$ belong to $L^\infty$ and that $f_{j_1},\ldots,f_{j_d}$ belong to $L^1$. For these spaces, the inequality analogous to \eqref{smallregion} is essentially just a consequence of the change of variables formula. The key information needed to apply change of variables successfully is the local invertibility of the map and an estimate of its Jacobian determinant. Both of these are provided by the following lemma:
\begin{lemma}
Suppose $\rho(u,v)$ and $\Phi(u,v)$ are $C^3$ functions on some open set $U \times V \subset \R^d \times \R^d$, where $U$ and $V$ are both $d$-fold products of intervals and let $\omega := (\omega_1,\omega_2)$ denote points on the unit circle in $\R^2$. Suppose \label{cov}
\begin{equation} 
\left| \det \left[ \begin{array}{cccc} 0 & \partial_{u_1} \rho & \cdots & \partial_{u_d} \rho \\
\partial_{v_1} \rho  & \partial^2_{v_1 u_1} \left( \omega_1 \Phi + \omega_2 \rho \right) & \cdots &  \partial^2_{v_1 u_d} \left( \omega_1 \Phi + \omega_2 \rho \right) \\
 \vdots & \vdots & \ddots & \vdots \\
\partial_{v_d} \rho  & \partial^2_{v_d u_1} \left( \omega_1 \Phi + \omega_2 \rho \right) & \cdots &  \partial^2_{v_d u_d} \left( \omega_1 \Phi + \omega_2 \rho \right)
\end{array} \right]  \right| \geq c \label{themaindet} \end{equation}
at some point $p := (u_p,v_p,\omega_p) \in U \times V \times {\mathbb S}^1$.
 Then there exist metric balls $U_0$, $V_0$, and $W_0$, centered at $u_p$, $v_p$, and $\omega_p$, respectively, 
such that for all $v \in V_0$ and all $\lambda \in \R$, the map
\[ \Psi_{v,\lambda} (u, \tau) :=  (\rho(u,v) , \lambda \nabla_v \Phi(u,v) + \tau \nabla_v \rho(u,v)) \in \R \times \R^d \]
is $1$--$1$ on the open set
\[ {\mathcal U}_\lambda := \set{ (u,\tau) \in U_0 \times \R}{ \frac{(\lambda,\tau)}{\sqrt{\lambda^2 + \tau^2}} \in W_0}. \]
The radii of the balls $U_0,V_0,$ and $W_0$ can be taken to depend only on $c$, $d$ and the $C^3$-norms of $\rho$ and $\Phi$ on $U \times V$.
Moreover, for $(u,\tau) \in {\mathcal U}_\lambda$,  the Jacobian determinant of $\Psi_{v,\lambda}$ with respect to $(u,\tau)$ satisfies
\begin{equation} \left| \det \frac{\partial \Psi_{v,\lambda}}{\partial (u,\tau)} \right| \approx (\lambda^2 + \tau^2)^{\frac{d-1}{2}} \label{jacobian} \end{equation}
for implicit constants depending only on $c$, $d$, and the $C^3$-norms of $\rho$ and $\Phi$.
\end{lemma}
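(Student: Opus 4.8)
The plan has two essentially independent parts: establishing the Jacobian identity \eqref{jacobian}, and proving injectivity on ${\mathcal U}_\lambda$. I would begin with the Jacobian. Computing the differential of $\Psi_{v,\lambda}$ in $(u,\tau)$ directly, its first row is $(\partial_{u_1}\rho,\ldots,\partial_{u_d}\rho,0)$ (the last entry vanishing since $\rho$ does not involve $\tau$), and for $k=1,\ldots,d$ its $(k{+}1)$-st row is $(\lambda\partial^2_{v_ku_1}\Phi+\tau\partial^2_{v_ku_1}\rho,\ldots,\lambda\partial^2_{v_ku_d}\Phi+\tau\partial^2_{v_ku_d}\rho,\partial_{v_k}\rho)$. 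Setting $R:=\sqrt{\lambda^2+\tau^2}$ and $\omega:=(\lambda,\tau)/R$, this matrix factors as $\mathrm{diag}(1,R,\ldots,R)\cdot B\cdot\mathrm{diag}(1,\ldots,1,R^{-1})$, where $B$ is exactly the matrix in \eqref{themaindet} with its first column moved into last place — a cyclic permutation of $d+1$ columns, of sign $(-1)^d$. Hence $\det\partial\Psi_{v,\lambda}/\partial(u,\tau)=(-1)^dR^{d-1}D(u,v,\omega)$, where $D$ denotes the determinant in \eqref{themaindet}. Since $D$ is continuous (its entries are first and second derivatives of $C^3$ functions and polynomials in $\omega$) and $|D(p)|\ge c$, one chooses metric balls $U_0,V_0,W_0$ about $u_p,v_p,\omega_p$ small enough — with radii controlled by $c$, $d$ and the $C^3$-norms — that $c/2\le|D|\le C$ throughout $U_0\times V_0\times W_0$, which is exactly \eqref{jacobian}.

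For injectivity, the main obstacle is that ${\mathcal U}_\lambda$ need not be bounded in $\tau$: when $\omega_p$ points nearly along the $\tau$-axis, arbitrarily large $\tau$ are permitted for fixed $\lambda$, so a naive ``derivative close to a fixed invertible matrix on a convex set'' argument fails because the $u$-block of the differential grows like $R$. The fix is to replace the straight segment by a bent path. Suppose $(u^0,\tau^0),(u^1,\tau^1)\in{\mathcal U}_\lambda$ have equal image. First I would verify that the concatenation of the segment $t\mapsto((1-t)u^0+tu^1,\tau^0)$ with the segment $\tau\mapsto(u^1,\tau)$ ($\tau$ between $\tau^0$ and $\tau^1$) stays in ${\mathcal U}_\lambda$: $U_0$ is convex, and along the second leg the direction $(\lambda,\tau)/\sqrt{\lambda^2+\tau^2}$ traces a monotone, hence minimal, arc between two points of the geodesically convex ball $W_0$. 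Along this path the $\tau$-leg contributes \emph{exactly} $(0,(\tau^1-\tau^0)\nabla_v\rho(u^1,v))$, since $\Psi_{v,\lambda}(u^1,\cdot)$ is affine in $\tau$; the $u$-leg contributes $(\bar a\cdot\Delta u,\bar M\Delta u)$, where $\Delta u:=u^1-u^0$, $\bar a:=\int_0^1\nabla_u\rho((1{-}t)u^0+tu^1,v)\,dt$, and $\bar M$ is the corresponding average of the matrix $\lambda\,\mathrm{Hess}_{vu}\Phi+\tau^0\,\mathrm{Hess}_{vu}\rho$. Equating the total to zero yields $\bar a\cdot\Delta u=0$ and $\bar M\Delta u+(\tau^1-\tau^0)\nabla_v\rho(u^1,v)=0$.

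It then remains to see that these two relations force $\Delta u=0$ and $\tau^0=\tau^1$, i.e.\ that $(w,s)\mapsto\bar Mw+s\,\nabla_v\rho(u^1,v)$ is injective on $\bar a^\perp\oplus\R$, and here the hypothesis \eqref{themaindet} enters decisively: expanding a bordered determinant shows that $D(p)\ne0$ is equivalent to invertibility on $\nabla_u\rho(p)^\perp\oplus\R$ of the model map $(w,s)\mapsto H^{\omega_p}w+s\,\nabla_v\rho(p)$, where $H^{\omega_p}:=\omega_{p,1}\mathrm{Hess}_{vu}\Phi(p)+\omega_{p,2}\mathrm{Hess}_{vu}\rho(p)$ (in particular $D(p)\ne0$ already forces $\nabla_v\rho(p)\ne0$). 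Since $\bar a$ is within $O(\mathrm{diam})$ of $\nabla_u\rho(p)$, the vector $\nabla_v\rho(u^1,v)$ within $O(\mathrm{diam})$ of $\nabla_v\rho(p)$, and $\bar M$ within $O(R^0\cdot\mathrm{diam})$ of $R^0H^{\omega_p}$ with $R^0:=\sqrt{\lambda^2+(\tau^0)^2}$, a perturbation estimate — carried out separately for $s=0$ and $s\ne0$ so as to track the mild $R^0$-anisotropy of the map, and shrinking $U_0,V_0,W_0$ by an amount depending only on $c$, $d$ and the $C^3$-norms — shows injectivity of the model map persists for $\bar M$. This gives $\Delta u=0$, $\tau^0=\tau^1$, and hence injectivity of $\Psi_{v,\lambda}$ on all of ${\mathcal U}_\lambda$ for every $\lambda\in\R$, completing the proof.
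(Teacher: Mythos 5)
Your proposal is correct and follows essentially the same route as the paper: the Jacobian factorization $\det\partial\Psi_{v,\lambda}/\partial(u,\tau)=\pm(\lambda^2+\tau^2)^{(d-1)/2}D(u,v,\omega)$ plus continuity of $D$ near $p$, and, for injectivity, a fundamental-theorem-of-calculus averaging that reduces $\Psi_{v,\lambda}(u^0,\tau^0)=\Psi_{v,\lambda}(u^1,\tau^1)$ to the invertibility of an averaged bordered matrix (your ``bent path'' is the same algebraic rearrangement the paper performs directly). The only place you are vaguer than the paper is the final perturbation step for the anisotropic map $(w,s)\mapsto\bar Mw+s\,\nabla_v\rho$: the clean way to make it uniform in $R^0$ is to invoke the degree-$(d-1)$ homogeneity of the bordered determinant in $(\lambda,\tau^0)$ (which you already established) together with multilinearity in the columns, so that the averaged determinant is an average of determinants each within $c/2$ of $D(p)$ and hence of a single sign.
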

\begin{proof}
Consider the variant of the determinant in \eqref{themaindet} in which every entry of the matrix is evaluated at its own triple $(u',v',\omega')$ near the triple $(u_p,v_p,\omega_p)$ (so that the modified determinant is a function of many triples $(u',v',\omega')$ rather than merely a single one).
By the Mean Value Theorem, there must be some radius $\delta$ depending only on $c$ and the derivatives of $\Phi$ and $\rho$ through order $3$ such that this generalized determinant must have magnitude greater than $c/2$ whenever every triple $(u',v',\omega')$ appearing in the determinant satisfies $|u' - u_p| < \delta$, $|v' - v_p| < \delta|$ and $|\omega' - \omega_p| < \delta$. Let $U_0, V_0,$ and $W_0$ be exactly these $\delta$-neighborhoods of $u_p,v_p,$ and $\omega_p$, respectively.

Given any $u,u' \in U_0$, let
\[ \angg{f} := \int_0^1 f( \theta(u' - u) + u,v) d \theta \]
for any continuous function $f$ on $U_0 \times V_0$. 
Consider $v \in V_0$ and $\lambda \in \R$ to be fixed and
suppose that the pairs $(u,\tau), (u',\tau') \in \mathcal U_\lambda$ satisfy $\Psi_{v,\lambda}(u,\tau) = \Psi_{v,\lambda}(u',\tau')$.
By the Fundamental Theorem of Calculus, it follows that
\begin{align*}
 0  = & \rho(u',v) - \rho(u,v) = \sum_{j=1}^d  (u'_j-u_j) \angg{\partial_{u_j} \rho}, \\
0  = & \lambda \left[ \partial_{v_i} \Phi(u',v) - \partial_{v_i} \Phi(u,v) \right] + \tau' [ \partial_{v_i} \rho(u',v) - \partial_{v_i} \rho(u,v)] + (\tau' - \tau) \partial_{v_i} \rho(u,v)  \\
 = & \sum_{j=1}^d  (u'_j-u_j) \angg{\partial^2_{v_i u_j} (\lambda \Phi + \tau' \rho)} + (\tau' - \tau) \partial_{v_i} \rho(u,v), 
\end{align*}
for each $i \in \{1,\ldots,d\}$. This implies that the matrix product
\[ \left[ \begin{array}{cccc} 0 & \angg{ \partial_{u_1} \rho} & \cdots & \angg{\partial_{u_d} \rho} \\
\partial_{v_1} \rho(u,v) &\angg{ \partial^2_{v_1u_1} ( \lambda \Phi + \tau' \rho)} & \cdots & \angg{ \partial^2_{v_1u_d} ( \lambda \Phi + \tau' \rho)} \\
\vdots & \vdots & \ddots & \vdots \\
\partial_{v_d} \rho(u,v) & \angg{ \partial^2_{v_d u_1} ( \lambda \Phi + \tau' \rho)}  & \cdots & \angg{ \partial^2_{v_d u_d} ( \lambda \Phi + \tau' \rho)}
\end{array} \right] \left[ \begin{array}{c} \tau' - \tau \\ u_1' - u_1 \\ \vdots \\ u_d' - u_d \end{array} \right] \]
must be zero.
If the determinant of the matrix is nonzero, the matrix must be invertible, which means that $(u,\tau) = (u',\tau')$.
The the determinant of this matrix is homogeneous of degree $d-1$ in the variables $(\lambda,\tau')$ since every nonzero product in the permutation expansion of the determinant will have exactly one entry from the first column and exactly one entry from the first row (because the only term belonging to both the first row and first column is zero), and because all other entries are homogeneous of degree $1$ in $(\lambda,\tau')$. Thus to show that $(u,\tau) = (u',\tau')$ it suffices to show that
\[ \det \left[ \begin{array}{cccc} 0 & \angg{ \partial_{u_1} \rho} & \cdots & \angg{\partial_{u_d} \rho} \\
\partial_{v_1} \rho(u,v) &\angg{ \partial^2_{v_1u_1} ( \omega_1' \Phi + \omega_2' \rho)} & \cdots & \angg{ \partial^2_{v_1u_d} ( \omega_1' \Phi + \omega_2' \rho)} \\
\vdots & \vdots & \ddots & \vdots \\
\partial_{v_d} \rho(u,v) & \angg{ \partial^2_{v_d u_1} ( \omega_1' \Phi + \omega_2' \rho)}  & \cdots & \angg{ \partial^2_{v_d u_d} ( \omega_1' \Phi + \omega_2' \rho)}
\end{array} \right] \neq 0 \]
where $(\omega_1',\omega_2') := (\lambda,\tau')/\sqrt{\lambda^2 + (\tau')^2} \in {\mathbb S}^1$.
By multilinearity of the determinant in its columns and by expanding the averages $\angg{\cdot}$ in each column to express the column as an average over the column along the line segment from $u$ to $u'$, this determinant can be written as an average of determinants of the form initially considered, specifically determinants of the same form that appears in \eqref{themaindet} with the difference that each column is a function of its own distinct $u$-variable. By assumption on $U_0$, these determinants are never zero, and so the average determinant will also never be zero.

To estimate the Jacobian determinant \eqref{jacobian}, the key is to note that the Jacobian matrix $\partial \Psi_{v,\lambda} / \partial (u,\tau)$ will (up to a permutation of rows and columns) have the same structure as the main determinant \eqref{themaindet} with $\omega_1$ replaced by $\lambda$ and $\omega_2$ replaced by $\tau$. As already observed, such a determinant will be homogeneous of degree $d-1$ in $(\lambda,\tau)$ and because the main determinant \eqref{themaindet} is bounded uniformly above and below on $U_0 \times V_0 \times W_0$ with constants depending only on $c$, $d$, and the $C^3$ norms of $\rho$ and $\Phi$, the desired relationship \eqref{jacobian} must hold.
\end{proof}

By Lemma \ref{cov} and the hypothesis \eqref{mainhyp}, for any value of $\lambda$,  $B_1 \times \R$ is covered by boundedly many subsets $F$, the number of which depends only on admissible constants, such that for each $F$, there is some partition of the indices $\{1,\ldots,2d\}$ into disjoint sets $\{i_1,\ldots,i_d\}$ and $\{j_1,\ldots,j_d\}$ for which
\begin{itemize}
\item For any fixed values of $(x_{i_1},\ldots,x_{i_d})$, the map
\[ (x_{j_1},\ldots,x_{j_d} , \tau) \mapsto ( \rho(x), \lambda \partial_{i_1} \Phi(x) + \tau \partial_{i_1} \rho(x) ,\ldots, \lambda \partial_{i_d} \Phi(x) + \tau \partial_{i_d} \rho(x))  \]
is one-to-one on the set $\set{(x_{j_1},\ldots,x_{j_d})}{ (x_1,\ldots,x_{2d} , \tau) \in F}$ and has Jacobian determinant with magnitude comparable to $(|\lambda| + |\tau|)^{d-1}$ up to multiplicative factors depending only on admissible constants.
\item For any fixed values of $(x_{j_1},\ldots,x_{j_d})$, the map
\[ (x_{i_1},\ldots,x_{i_d} , \tau) \mapsto ( \rho(x), \lambda \partial_{j_1} \Phi(x) + \tau \partial_{j_1} \rho(x) ,\ldots, \lambda \partial_{j_d} \Phi(x) + \tau \partial_{j_d} \rho(x))  \]
is one-to-one on the set $\set{(x_{i_1},\ldots,x_{i_d})}{ (x_1,\ldots,x_{2d} , \tau) \in F}$ and has Jacobian determinant with magnitude comparable to $(|\lambda| + |\tau|)^{d-1}$ up to multiplicative factors depending only on admissible constants.
\end{itemize}

Returning to the proof of \eqref{smallregion}, let $F$ be any such region in $B_1 \times \R$ as described above, and  let $E_F := \set{ (y,\tau,\xi) \in E}{ (y,\tau) \in F}$.
\begin{align*} & \int_{E_F} \frac{r^d \chi_{|\rho(y)| \lesssim r}}{ ( 1 + r | \lambda \nabla \Phi(y) + 2 \pi \xi + \tau \nabla \rho(y)| )^{2d+1}} \prod_{j=1}^{2d} |f_j(y_j,\xi_j)| d y d \tau d \xi \\
& \leq  \mathop{\mathop{\mathrm{ess.sup}}_{y_{j_1},\xi_{j_1}, \ldots}}_{\ldots, y_{j_d},\xi_{j_d}} \int \frac{r^d  \chi_E(y,\tau,\xi) \chi_{|\rho(y)| \lesssim r} d y_{i_1} d \xi_{i_1} \cdots d y_{i_d} d \xi_{i_d} d \tau}{ ( 1 + r | \lambda \nabla \Phi(y) + 2 \pi \xi + \tau \nabla \rho(y)| )^{2d+1}}   \prod_{k=1}^d ||f_{i_k}||_\infty ||f_{j_k}||_1.
\end{align*}

Several observations are in order. First, on $E$ each $r_j$ is comparable to $r$, and so up to a factor which depends only on admissible constants, one may simply replace $r$ by $r_{j_1}$. This is advantageous because $r_{j_1}$ is independent of the variables of integration $y_{i_k}$, $\xi_{i_k}$ for $k \in \{1,\ldots,d\}$. Next, if $P_{j_1 \cdots j_d}$ denotes orthogonal projection from $\R^{2d}$ onto the coordinates indexed by $j_1,\ldots,j_d$, then
\[ \int_{\R^d} \frac{d \xi_{i_1} \cdots d \xi_{i_d}} {(1 + r_{j_1} | v + 2 \pi \xi|)^{2d+1}} \lesssim \frac{r_{j_1}^{-d}}{(1 + r_{j_1} |P_{j_1 \cdots j_d} (v + 2 \pi \xi)|)^{d+1}} \]
for any $v \in \R^{2d}$, which follows by change of variables and the observation that $1 + r_{j_1} |v + 2 \pi \xi| \approx 1 + r_{j_1} |P_{j_1 \cdots j_d} ( v + 2 \pi \xi)| + r_{j_1} |P_{i_1 \cdots i_d} (v + 2 \pi \xi)|$.
It follows that
\begin{align} \mathop{\mathop{\mathrm{ess.sup}}_{y_{j_1},\xi_{j_1}, \ldots}}_{\ldots, y_{j_d},\xi_{j_d}} & \int \frac{r^d  \chi_{E_F}(y,\tau,\xi) \chi_{|\rho(y)| \lesssim r} d y_{i_1} d \xi_{i_1} \cdots d y_{i_d} d \xi_{i_d} d \tau}{ ( 1 + r | \lambda \nabla \Phi(y) + 2 \pi \xi + \tau \nabla \rho(y)| )^{2d+1}} \nonumber \\
& \lesssim  \mathop{\mathop{\mathrm{ess.sup}}_{y_{j_1},\xi_{j_1}, \ldots}}_{\ldots, y_{j_d},\xi_{j_d}} \int \frac{\chi_{\tilde F}(y,\tau) \chi_{|\rho(y)| \lesssim r_{j_1}} d y_{i_1}  \cdots d y_{i_d}  d \tau}{ ( 1 + r_{j_1} | P_{j_1 \cdots j_d} (\lambda \nabla \Phi(y) + 2 \pi \xi + \tau \nabla \rho(y) ) | )^{d+1}},  \label{lastrhs}
\end{align}
where $\tilde F \subset F$  is the subset of points which satisfy $|\tau| + |\lambda| \approx r_{j_1}^{-2}$ for the given value of $r_{j_1}$.
The integral \eqref{lastrhs} can be estimated by writing $(y_{i_1},\ldots,y_{i_d},\tau)$ as a function of $\rho(y)$ and $P_{j_1 \cdots j_{d}} (\lambda \nabla \Phi(y) + 2 \pi \xi + \tau \nabla \rho(y) )$ for fixed $y_{j_1},\xi_{j_1},\ldots, y_{j_d},$ and $\xi_{j_d}$. By \eqref{jacobian}, the Jacobian determinant of the map $(y_{i_1},\ldots,y_{i_d},\tau) \mapsto (\rho(y), P_{j_1 \cdots j_{d}} (\lambda \nabla \Phi(y) + 2 \pi \xi + \tau \nabla \rho(y) ))$ is comparable to $(|\lambda| + |\tau|)^{d-1} \approx r_{j_1}^{-2(d-1)}$. Therefore by the change of variables formula and \eqref{lastrhs},
\begin{align*} \mathop{\mathop{\mathrm{ess.sup}}_{y_{j_1},\xi_{j_1}, \ldots}}_{\ldots, y_{j_d},\xi_{j_d}} & \int \frac{r^d  \chi_{E_F}(y,\tau,\xi) \chi_{|\rho(y)| \lesssim r} d y_{i_1} d \xi_{i_1} \cdots d y_{i_d} d \xi_{i_d} d \tau}{ ( 1 + r | \lambda \nabla \Phi(y) + 2 \pi \xi + \tau \nabla \rho(y)| )^{2d+1}} \nonumber \\
& \lesssim  \mathop{\mathop{\mathrm{ess.sup}}_{y_{j_1},\xi_{j_1}, \ldots}}_{\ldots, y_{j_d},\xi_{j_d}} r_{j_1}^{2d - 2} \int \frac{\chi_{|s| \lesssim r_{j_1}} du_1 \cdots d u_d ds}{(1 + r_{j_1} |u|)^{d+1}}  \\
& \lesssim  \mathop{\mathop{\mathrm{ess.sup}}_{y_{j_1},\xi_{j_1}, \ldots}}_{\ldots, y_{j_d},\xi_{j_d}} r_{j_1}^{d - 2} \int \frac{\chi_{|s| \lesssim r_{j_1}} du_1 \cdots d u_d ds}{(1 +  |u|)^{d+1}} \lesssim \mathop{\mathop{\mathrm{ess.sup}}_{y_{j_1},\xi_{j_1}, \ldots}}_{\ldots, y_{j_d},\xi_{j_d}} r_{j_1}^{d - 1} \lesssim |\lambda|^{-\frac{d-1}{2}}.
\end{align*}
Thus
\begin{equation*} \int_{E_F}  \frac{r^d \chi_{|\rho(y)| \lesssim r} \prod_{j=1}^{2d} |f_j(y_j,\xi_j)|d y d \tau d \xi }{ ( 1 + r | \lambda \nabla \Phi(y) + 2 \pi \xi + \tau \nabla \rho(y)| )^{2d+1}}   \lesssim |\lambda|^{-\frac{d-1}{2}} \prod_{k=1}^{d} ||f_{i_k}||_{\infty} ||f_{j_k}||_1. \end{equation*}
A completely symmetric argument establishes the same inequality with the roles of $i_1,\ldots,i_d$ and $j_1,\ldots,j_d$ reversed, and a subsequent interpolation gives the analogue of \eqref{smallregion} for integration over $E_F$. The proof of \eqref{mainresult} then follows by summing over the regions $F$ on which Lemma \ref{cov} applies to our change of variables map for appropriate choice of indices $\{i_1,\ldots,i_d\}$ and $\{j_1,\ldots,j_d\}$.

\bibliography{mybib}

\end{document}